\theoremstyle{plain}
\newtheorem{theorem}{Theorem}[section]
\newtheorem{lemma}[theorem]{Lemma}
\newtheorem{definition}[theorem]{Definition}
\newtheorem{proposition}[theorem]{Proposition}
\newtheorem*{claim}{Claim}
\theoremstyle{definition}
\newtheorem*{remark}{Remark}
\newtheorem*{acknowledgements}{Acknowledgements}
\def\1{\mathbf 1}
\def\Mod{\mathrm{Mod}}
\begin{document}

\title{\textbf{The asymptotic behavior of least pseudo-Anosov dilatations}}
\author{Chia-yen Tsai}
\maketitle
{\abstract For a surface $S$ with $n$ marked points and fixed genus $g\geq2$, we prove that the logarithm of the minimal dilatation of a pseudo-Anosov homeomorphism of $S$ is on the order of $\frac{\log n}{n}$.  This is in contrast with the cases of genus zero or one where the order is $\frac{1}{n}$.}

\section{Introduction}\label{introsect}
Let $S=S_{g,n}$ be an orientable surface with genus $g$ and $n$ marked points.  The {\bf mapping class group} of $S$ is defined to be the group of homotopy classes of orientation preserving homeomorphisms of $S$.  We denote it as $\Mod(S)$.  Given $f \in \Mod(S)$ a pseudo-Anosov element, let $\lambda(f)$ denote the {\bf dilatation} of $f$; see section 2.1.  We define
\begin{align*}
{\cal L}(S_{g,n}):=\{ \log \lambda(f) | f\in \Mod(S_{g,n}) \text{ pseudo-Anosov } \}.
\end{align*}
This is precisely the length spectrum of the moduli space ${\cal M}_{g,n}$ of Riemann surfaces of genus $g$ with $n$ marked points with respect to the Teichmuller metric; see \cite{Iv}.  There is a shortest closed geodesic and we denote its length
\begin{align*}
 l_{g,n}=\min\{\log \lambda(f)|f \in \Mod(S_{g,n})\text{ pseudo-Anosov} \}.
\end{align*}
Our main theorem is the following:
\begin{theorem}\label{main}
For any fixed $g\geq 2$, there is a constant $c_{g}\geq1$ depending on $g$ such that
\begin{align*}
\frac{\log n}{c_{g}n} < l_{g,n} < \frac{c_{g}\log n}{n},
\end{align*}
for all $n\geq 3$.
\end{theorem}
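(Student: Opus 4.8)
The plan is to prove the two estimates by entirely separate arguments. The upper bound $l_{g,n}<c_g\frac{\log n}{n}$ is an existence statement, to be settled by exhibiting an explicit family of pseudo-Anosov maps; the lower bound $l_{g,n}>\frac{\log n}{c_gn}$ must hold for \emph{every} pseudo-Anosov map, so it will require an obstruction argument.

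\textbf{Upper bound.} For each $n\ge3$ I would construct an explicit $f_n\in\Mod(S_{g,n})$ together with an invariant train track $\tau_n$ carrying its stable foliation, arranged so that $\lambda(f_n)$ is the Perron--Frobenius eigenvalue of the incidence matrix $M_n$ of $\tau_n$. The model to keep in mind is the genus $0$ and genus $1$ situation: there one strings the marked points along a chain carried by a track whose incidence matrix is essentially a single $n$-cycle with one extra branch, so that $\lambda(f_n)^n$ stays bounded and $\log\lambda(f_n)\asymp\frac1n$. For fixed $g\ge2$ the genus cannot be threaded through the $n$ marked points this cheaply; the natural substitute attaches one fixed genus-$g$ block to a chain of $\Theta_g(n)$ curves running past the marked points, and takes $f_n$ to be a Penner (or Thurston--Veech) product of powers of Dehn twists adapted to $\tau_n$. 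The matrix $M_n$ is then a bounded perturbation of an $n$-cycle; one writes down its characteristic polynomial and reads off that its dominant root is $n^{O_g(1/n)}$, i.e.\ $\log\lambda(f_n)\le c_g\frac{\log n}{n}$. Checking that the two foliations of $f_n$ genuinely fill, and carrying out the eigenvalue estimate, are routine once $\tau_n$ and the twisting exponents are written down.

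\textbf{Lower bound.} Let $f\in\Mod(S_{g,n})$ be pseudo-Anosov with invariant quadratic differential $q$, normalized to area $1$, and set $\lambda=\lambda(f)$. First I would reduce: a marked point at which $q$ is neither a zero nor a simple pole is a regular point, and forgetting it together with its whole $f$-orbit gives a pseudo-Anosov on a genus-$g$ surface with the same $\lambda$ and fewer marked points; since $m\mapsto\frac{\log m}{m}$ is decreasing this only weakens the inequality to be proved, so I may assume every marked point is a cone point of $q$, whence $q$ has $\Theta_g(n)$ cone points. Now trichotomize the map $\bar f\in\Mod(S_{g,0})$ obtained by filling in all marked points. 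If $\bar f$ is pseudo-Anosov, then $\lambda(f)\ge\lambda(\bar f)\ge l_{g,0}>0$, a constant depending only on $g$, which exceeds $\frac{\log n}{c_gn}$ once $c_g$ is large (and the finitely many remaining $n$ are absorbed into $c_g$). The real case is when $\bar f$ is reducible or of finite order. Here a useful first observation is that forgetting all but one marked point sends $f$ to a point-pushing map on $S_{g,1}$; if that map is pseudo-Anosov --- equivalently its pushing loop fills $S_{g,0}$ --- then again $\lambda(f)\ge l_{g,1}>0$ and we are done. So the genuinely hard situation is that \emph{all} such one-point reductions are reducible or trivial, i.e.\ $f$ is an honest surface braid shuffling the $n$ marked points around a surface of bounded genus. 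For this I would analyze the flat structure of $q$: by a packing argument a shortest saddle connection has length $O_g(n^{-1/2})$, so its two-sided $f$-orbit contains $\gtrsim\frac{\log n}{\log\lambda}$ pairwise distinct saddle connections of length at most a fixed constant, all with holonomy vectors on one hyperbola $XY=\mathrm{const}$; one then wants to bound the supply of such short saddle connections by $O_g(n)$, using the Euler--Poincar\'e and Gauss--Bonnet constraints, which in fixed genus $g\ge2$ sharply restrict the configuration of cone points. Matching the two counts would give $\log\lambda\gtrsim_g\frac{\log n}{n}$.

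\textbf{Where the difficulty lies.} Everything hard sits in this last case. The classical Penner estimate --- bounding $\lambda$ below through the size of an incidence matrix --- gives only $\log\lambda(f)\gtrsim\frac1n$, which is weaker than the claimed bound once $n$ is large; producing the extra factor $\log n$ is exactly the point at which the hypothesis "genus fixed and $\ge2$" has to be used. Concretely, one must pin down a geometric or combinatorial quantity of $(X,q)$ that is $O_g(n)$ for \emph{any} pseudo-Anosov on $S_{g,n}$ yet is forced to be $\gtrsim\frac{\log n}{\log\lambda(f)}$ for the $f$ at hand --- and verify that it genuinely behaves differently in genus $\ge2$ than in genus $0$ or $1$ (in the naive saddle-connection count above, for instance, the total number of short saddle connections can be as large as $\asymp n^2$, so that count must be replaced by a finer one, such as the count along a single hyperbola of directions). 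This is the step I expect to require a new idea rather than bookkeeping.
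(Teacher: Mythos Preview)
Your upper bound sketch is plausible in spirit but vaguer than the paper's, which takes a different route: it lifts the Hironaka--Kin sphere example $\phi\in\Mod(S_{0,m+2})$ through a $(2g+1)$-fold cyclic branched cover to get $\psi\in\Mod(S_{g,n})$ with $\lambda(\psi)=\lambda(\phi)$, then bounds $\lambda(\phi)$ explicitly by $m^{3/m}$ via the known characteristic polynomial. This avoids having to analyze a new transition matrix at each genus.

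Your lower bound, however, has a genuine gap, and you name it yourself: the saddle-connection counting argument in the hard case is not completed, and you acknowledge that the naive count may be off by a factor of $n$. The paper's resolution of this step is entirely different and does not use flat geometry at all. The mechanism is a Lefschetz number computation. After passing to a bounded power $\alpha\le\Theta(g)$ so that the fill-in $\widehat{f}^{\alpha}\in\Mod(S_{g,0})$ is either pseudo-Anosov on a subsurface (your easy case), the identity, or a multitwist, the key observation is that in the latter two cases
\[
L(f^{\alpha})=L(\widehat{f}^{\alpha})=\chi(S_{g,0})=2-2g<0,
\]
and this is exactly where $g\ge2$ enters. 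A negative Lefschetz number forces a fixed point $x$ at which a separatrix of $\mathcal F^{s}$ is fixed (otherwise every local Lefschetz number would be $+1$); at such an $x$ some Markov rectangle $R$ satisfies $\mathrm{int}\,f^{\alpha}(R)\cap\mathrm{int}\,R\ne\emptyset$, i.e.\ the transition matrix $M^{\alpha}$ has a nonzero diagonal entry. For a $k\times k$ integral Perron--Frobenius matrix with a nonzero diagonal entry one has $\mu(M^{2k\alpha})\ge k$ (a short directed-graph argument), hence $\log\lambda(f)\ge\frac{\log k}{2\alpha k}$. With $k\le 18g+6n-18$ from the Bestvina--Handel bound on the number of rectangles, this gives the desired $\frac{\log n}{n}$ lower bound. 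The extra $\log n$ that you were missing comes precisely from the diagonal entry, which upgrades the generic Perron--Frobenius estimate $\mu(M^{k})\ge 2$ to $\mu(M^{2k})\ge k$.
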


To contrast with known results, we recall that in \cite{Pe}, Penner proves for $2g-2+n>0$ and $n\geq0$,
\begin{align*}
l_{g,n} \geq \frac{\log2}{12g-12+4n},
\end{align*}
and for closed surfaces
\begin{align*}
\frac{\log2}{12g-12}\leq l_{g,0} \leq \frac{\log11}{g}.
\end{align*}
The bounds on $l_{g,0}$ have been improved by a number of authors; \cite{Ba}, 
\cite{Mc}, 
\cite{Mk},
\cite{HK}.

In \cite{Pe}, Penner suggests that there may be an ``analogous upper bound for $n\neq0$''.  In \cite{HK}, Hironaka and Kin use a concrete construction to prove that for genus $g=0$
\begin{align*}
l_{0,n}< \frac{\log(2+\sqrt{3})}{ \left\lfloor \frac{n-2}{2} \right\rfloor }\leq \frac{2\log(2+\sqrt{3})}{n-3},
\end{align*}
for all $n\geq 4$.  The inequality is proven for even $n$ in \cite{HK}, but it follows for odd $n$ by letting the fixed point of their example be a marked point.  Combining this with Penner's lower bound, one sees for $n\geq 4$
\begin{align*}
\frac{\log2}{4n-12} \leq l_{0,n} <\frac{2\log(2+\sqrt{3})}{n-3},
\end{align*}
which shows that the upper bound is on the same order as Penner's lower bound for $g=0$.  A similar situation holds for $g=1$; see the Appendix.

Inspired by the construction of Hironaka and Kin, we tried to find examples of pseudo-Anosov $f_{g,n}\in \Mod(S_{g,n})$ with $\log\lambda(f_{g,n}) = O(\frac{1}{g+n})$.  However for any fixed $g\geq2$, all attempts resulted in $f_{g,n}\in \Mod(S_{g,n})$ pseudo-Anosov with $\log\lambda(f_{g,n}) = O_{g}(\frac{\log n}{n})$ and not $O(\frac{1}{c(g,n)})$ for any linear function $c(g,n)$.  This led us to prove Theorem \ref{main}.

\subsection{Outline of the paper}
We will first recall some definitions and properties in section \ref{pre}.  In section \ref{lowerbd} we prove the lower bound of Theorem \ref{main}.  We construct examples in section \ref{upperbd} which give an upper bound for the genus $2$ case, and we extend the example to arbitrary genus $g\geq 2$ to obtain the upper bound of Theorem \ref{main}.  Finally, we construct a pseudo-Anosov element in $\Mod(S_{1,2n})$ and obtain an upper bound on $l_{1,2n}$ in the Appendix.

\begin{acknowledgements}
The author would like to thank Christopher Leininger for key discussions and for revising an earlier draft.  Kasra Rafi and A.J. Hildebrand offered helpful suggestions and insights.  I would also like to thank MSRI for its stimulating, collaborative research environment during its fall 2007 programs.

\end{acknowledgements}
\section{Preliminaries}\label{pre}

\subsection{Homeomorphisms of a surface}

We say that a homeomorphism $f:S \rightarrow S$ is {\bf pseudo-Anosov} if there are transverse singular foliations ${\cal F}^{s}$ and ${\cal F}^{u}$ together with transverse measures $\mu^{s}$ and $\mu^{u}$ such that for some $\lambda>1$
\begin{align*}
f({\cal F}^{s},\mu^{s})&=({\cal F}^{s},\lambda\mu^{s}),\\
f({\cal F}^{u},\mu^{u})&=({\cal F}^{u},\lambda^{-1}\mu^{u}).
\end{align*}
The number $\lambda=\lambda(f)$ is called the {\bf dilatation} of $f$.  We call $f$ {\bf reducible} if there is a finite disjoint union $U$ of simple essential closed curves on $S$ such that $f$ leaves $U$ invariant.  If there exists $k>0$ such that $f^{k}$ is the identity, then $f$ is {\bf periodic}.  A mapping class $[f]$ is pseudo-Anosov, reducible or periodic (respectively) if $f$ is homotopic to a pseudo-Anosov, reducible or periodic homeomorphism (respectively).  The following is proved in \cite{FLP}.
\begin{theorem}(Nielsen-Thurston)
A mapping class $[f] \in Mod(S)$ is either periodic, reducible, or pseudo-Anosov.
\end{theorem}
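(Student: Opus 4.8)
\emph{Proof proposal.} This is the classical Nielsen--Thurston classification, and the plan is to reproduce Thurston's argument via a fixed-point theorem on his compactification of Teichmuller space (I indicate Bers' alternative at the end). Assume $S$ is of hyperbolic type, i.e.\ $2g-2+n>0$ (otherwise the statement is vacuous). Recall that $\T(S)$ is homeomorphic to an open ball of dimension $6g-6+2n$, that Thurston's compactification $\overline{\T(S)}=\T(S)\sqcup\PMF(S)$ is a closed ball whose boundary sphere is $\PMF(S)$, and that the action of $\Mod(S)$ on $\T(S)$ extends to a continuous action on $\overline{\T(S)}$ acting on $\PMF(S)$ in the evident way. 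Choosing a homeomorphism of $\overline{\T(S)}$ representing $[f]$ and applying Brouwer's fixed point theorem, $[f]$ fixes some $x\in\overline{\T(S)}$. I would then split into two cases: either $[f]$ fixes a point of $\T(S)$, or it does not, in which case every fixed point of $[f]$ (and of $[f]^{-1}$) lies on $\PMF(S)$.

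\emph{Interior fixed point.} If $[f]$ fixes $X\in\T(S)$, then $[f]$ is represented by a conformal automorphism of the Riemann surface $(S,X)$. Since $S$ is hyperbolic, its automorphism group is finite (Hurwitz), so $[f]$ has finite order and is periodic.

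\emph{Boundary fixed point.} Suppose $[f]$ has no fixed point in $\T(S)$, and let $[\mu]\in\PMF(S)$ be fixed, with representative $\mu\in\MF(S)$, so $f(\mu)=\lambda\mu$ for some $\lambda>0$. If $\mu$ does not fill $S$ (i.e.\ some essential simple closed curve has zero intersection number with $\mu$), then the minimal-component/cylinder decomposition of $\mu$ determines a canonical nonempty multicurve $C$ (boundaries of the subsurfaces filled by minimal components, together with core curves of cylinders); since $f(\mu)$ is projectively equal to $\mu$, $[f]$ preserves $C$ up to isotopy, so $[f]$ is reducible. If $\mu$ fills $S$ but $\lambda=1$, then $[f]$ preserves the measured foliation $\mu$ exactly, and one checks $[f]$ is then periodic. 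Finally, if $\mu$ fills $S$ and $\lambda\neq1$ --- replacing $[f]$ by $[f]^{-1}$ if necessary so that $\lambda>1$ --- I would apply the same dichotomy to $[f]^{-1}$ to obtain a fixed foliation $\nu\in\MF(S)$ with $f^{-1}(\nu)=\lambda^{-1}\nu$, show that $\nu$ also fills $S$ and is transverse to $\mu$ (so $i(\mu,\nu)>0$), and then take $(\F^{u},\mu^{u})=\mu$ and $(\F^{s},\mu^{s})=\nu$: this exhibits the transverse invariant singular foliations of the definition, so $[f]$ is pseudo-Anosov with dilatation $\lambda$.

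\emph{Main obstacle.} The crux is the last case: one must show that the foliation produced from $[f]^{-1}$ is genuinely \emph{transverse} to $\mu$, rather than merely a different transverse measure on the same underlying lamination (which would force stretch factor $\lambda^{-1}$ on the same class and not produce a pseudo-Anosov). This is where one needs the finer input that $\mu$ is uniquely ergodic, equivalently a direct analysis showing that the $[f]$-action on $\PMF(S)$ has north--south dynamics with source and sink the two invariant projective classes; making this rigorous is the technical heart of the proof. An alternative route, due to Bers, avoids Thurston's compactification: set $a([f])=\inf_{X\in\T(S)}d_{\mathrm{Teich}}(X,f(X))$. If this infimum is attained and equal to $0$ we are in the periodic case; if attained and positive, the minimizing point lies on an $[f]$-invariant Teichmuller geodesic, whose associated quadratic differential supplies the invariant foliations and realizes $[f]$ as pseudo-Anosov; and if the infimum is not attained (whether $0$ or positive), a minimizing sequence leaves every compact subset of moduli space, so by Mumford's compactness criterion some simple closed curve has hyperbolic lengths tending to $0$ along the sequence, and a limiting argument extracts an $[f]$-invariant isotopy class of multicurve, giving the reducible case --- the technical heart there being to turn the degeneration into an honest invariant multicurve.
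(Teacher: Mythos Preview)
The paper does not prove this theorem: it simply states it and cites \cite{FLP}. So there is no ``paper's own proof'' to compare against; the statement is being quoted as background.

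That said, your outline is a faithful sketch of the standard arguments. The Thurston route you describe --- Brouwer fixed point on $\overline{\T(S)}$, interior fixed point $\Rightarrow$ periodic via Hurwitz, boundary fixed point split by whether the invariant foliation fills --- is essentially what one finds in \cite{FLP} and in Thurston's original announcement. You have also correctly flagged the genuine technical content: in the filling case with $\lambda\neq 1$, producing a \emph{transverse} second foliation (equivalently, establishing north--south dynamics on $\PMF(S)$ with distinct source and sink, which uses unique ergodicity of the fixed lamination) is where the work lies. The $\lambda=1$ filling subcase is also slightly more delicate than you indicate --- one typically argues that an $f$ preserving a filling measured foliation must permute its singularities and separatrices, hence has a power isotopic to the identity --- but your instinct that it yields periodic is right. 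Your summary of Bers' alternative via the translation-length trichotomy on $\T(S)$ is accurate as well, including the identification of the non-realized case as the subtle one.

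In short: nothing is wrong with your proposal, but for the purposes of this paper the theorem is invoked, not proved, so a citation suffices.
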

As a slight abuse of notation, we sometimes refer to a mapping class $[f]$ by one of its representatives $f$.
\subsection{Markov partitions}\label{secMarkov}
Suppose $f:S \rightarrow S$ is pseudo-Anosov with stable and unstable measured singular foliations $({\cal F}^{s},\mu^{s})$ and $({\cal F}^{u},\mu^{u})$.  We define a rectangle $R$ to be a map $\rho:I\times I \rightarrow S$ such that $\rho$ is an embedding on the interior, $\rho(point\times I)$ is contained in a leaf of ${\cal F}^{u}$, and $\rho(I\times point)$ is contained in a leaf of ${\cal F}^{s}$.  We denote $\rho(\partial I\times I)$ by $\partial ^{u}R$ and $\rho(I\times \partial I)$ by $\partial ^{s}R$.
\begin{figure}[htbp]
\begin{center}
\psfrag{a}{$\partial^{s}R$}
\psfrag{b}{$\partial^{u}R$}
\psfrag{r}{$R$}
\psfrag{c}{}
\includegraphics[width=0.3\textwidth]{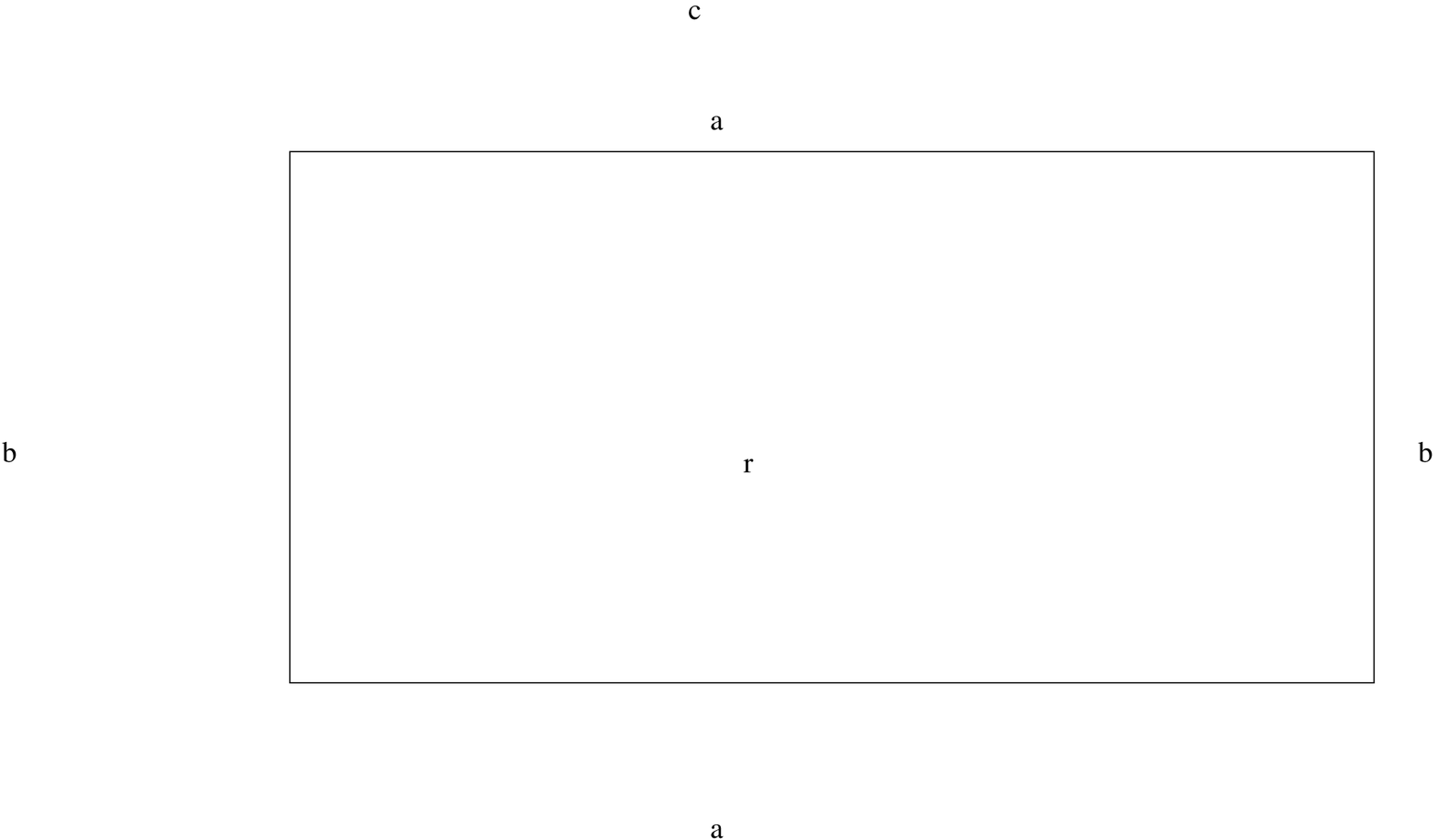}
\end{center}
\end{figure}

As a standard abuse of notation, we will write $R\subset S$ for the image of a rectangle $\rho:I\times I \rightarrow S$.
\begin{definition}
A Markov partition for $f:S \rightarrow S$ is a decomposition of $S$ into a finite union of rectangles $\{R_{i}\}_{i=1}^{k}$, such that:
\begin{enumerate}
\item $Int(R_{i})\cap Int(R_{j})$ is empty,  when $ i\neq j$
\item $f(\bigcup_{j=1}^{k}\partial ^{u}R_{j})\subset \bigcup_{j=1}^{k}\partial ^{u}R_{j}$
\item $f^{-1}(\bigcup_{i=1}^{k}\partial ^{s}R_{i})\subset \bigcup_{i=1}^{k}\partial ^{s}R_{i}$
\end{enumerate}
\end{definition}

Given a pseudo-Anosov homeomorphism $f:S\rightarrow S$, a Markov partition is constructed in \cite{BH} from a train track map for $f$.  The advantage of this construction over \cite{FLP}, for example, is that the number of rectangles is at most the number of branches of the train track.  So since the latter number is bounded above by $-9 \chi (S)-3n$ (see \cite{PeHa}), one has the following.
\begin{theorem}(Bestvina-Handel)\label{markov}
For any pseudo-Anosov homeomorphism $f:S\rightarrow S$, there exists a Markov partition of $f$ with $k$ rectangles, where $k \leq -9 \chi (S)-3n $.
\end{theorem}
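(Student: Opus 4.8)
The plan is to extract the Markov partition directly from a train track representative of $f$, following Bestvina and Handel \cite{BH}, and then to bound the number of rectangles by the number of branches of that train track using Penner and Harer \cite{PeHa}.

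First I would apply the Bestvina--Handel algorithm to obtain an efficient, filling, birecurrent train track $\tau$ on $S$ carrying the unstable foliation $\F^{u}$, together with a train track map $\phi\co\tau\to\tau$ representing $f$: $\phi$ sends switches to switches and each branch to a tight edge-path in $\tau$, and it is expanding because $f$ is pseudo-Anosov. Next I would fatten $\tau$ to a tie neighborhood $N(\tau)$, choosing the ties to be sub-arcs of leaves of $\F^{s}$, so that the leaves of $\F^{u}$ run along $\tau$ crossing the ties transversally. To each branch $b$ of $\tau$ one then associates the band $R_{b}\subset N(\tau)$ lying over $b$: a rectangle whose two $\partial^{u}$ sides (in the notation of Section \ref{secMarkov}) are the long sides of the band, running parallel to $b$ inside leaves of $\F^{u}$, and whose two $\partial^{s}$ sides are the ties over the switches at the ends of $b$. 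The bands cover all of $N(\tau)$ except the switches, and $S\setminus N(\tau)$ consists of cusped polygons (once-punctured at the marked points). Collapsing each complementary region onto a spine and absorbing the switch ties into the stable boundaries --- the standard passage from a filling birecurrent train track to the foliation $\F^{u}$ --- turns $\{R_{b}\}$ into a decomposition of $S$ into rectangles with pairwise disjoint interiors, which is condition (1).

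For conditions (2) and (3) I would use the structure of the train track map. Since $\phi(b)=b_{i_{1}}\cdots b_{i_{m}}$ is a tight edge-path, $f(R_{b})$ is, up to isotopy inside $N(\tau)$, a band following that path, stretched along $\F^{u}$ and compressed along $\F^{s}$; its two long sides thus lie inside the long sides of $R_{b_{i_{1}}},\dots,R_{b_{i_{m}}}$, giving $f(\bigcup_{j}\partial^{u}R_{j})\subset\bigcup_{j}\partial^{u}R_{j}$, and the fact that switches go to switches is precisely what forces the transition between consecutive bands to occur along a tie, so that no part of $f(\partial^{u}R_{b})$ falls into the interior of a band. Condition (3) is condition (2) for $f^{-1}$, applied to a train track carrying $\F^{s}$; part of the work here is to arrange that its band decomposition is the same family $\{R_{b}\}$, after which $f^{-1}(\bigcup_{i}\partial^{s}R_{i})\subset\bigcup_{i}\partial^{s}R_{i}$ follows. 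Hence $\{R_{b}\}$ is a Markov partition for $f$ with exactly one rectangle per branch of $\tau$, and by Penner--Harer \cite{PeHa} a train track on $S=S_{g,n}$ has at most $-9\chi(S)-3n$ branches, so $k\le-9\chi(S)-3n$.

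I expect the second step to be the main obstacle: carrying out the fattening so that, after collapsing the complementary cusped regions and the switch ties, the bands $R_{b}$ genuinely tile $S$ with disjoint interiors and with boundaries that are honest arcs of $\F^{s}$ and $\F^{u}$, and so that the combinatorics of $\phi$ translate \emph{exactly}, rather than merely up to isotopy, into the inclusions (2) and (3). By contrast, the branch count is a direct citation and the rectangle--branch correspondence is immediate once the construction is set up.
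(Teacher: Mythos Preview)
Your proposal is correct and is precisely the approach the paper indicates: the paper does not prove this theorem but simply cites Bestvina--Handel \cite{BH} for the construction of a Markov partition from an invariant train track (one rectangle per branch) and Penner--Harer \cite{PeHa} for the bound $-9\chi(S)-3n$ on the number of branches. Your sketch fills in exactly what those citations contain, and your identification of the delicate step---making the band decomposition an honest Markov partition with the correct boundary behavior rather than one only up to isotopy---is accurate, though this is handled in \cite{BH} and need not be reproved here.
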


We say that a matrix is {\it positive (respectively nonnegative)} if all the entries are positive (respectively nonnegative).

We can define a {\it transition matrix} $M$ associated to the Markov partition with rectangles $\{R_{i}\} _{i=1}^{k}$.  The entry $m_{i,j}$ of $M$ is the number of times that $f(R_{j})$ wraps over $R_{i}$, so $M$ is a nonnegative integral $k \times k$ matrix.  In Bestvina and Handel's construction, $M$ is the same as the transition matrix of the train track map and they show it is an integral Perron-Frobenius matrix (i.e. it is irreducible with nonnegative integer entries); see \cite{Gm}.  Furthermore, the Perron-Frobenius eigenvalue $\mu(M)=\lambda(f)$ is the dilatation of $f$.  The width (respectively height) of $R_{i}$ is the $i$th entry of the corresponding Perron-Frobenius eigenvector of $M$(respectively $M^{T}$), where the eigenvectors are both positive by the irreducibility of $M$. 

The following proposition will be used in proving the lower bound.
\begin{proposition}\label{lbd}
Let $M$ be a $k\times k$ integral Perron-Frobenius matrix.  If there is a nonzero entry on the diagonal of $M$, then $M^{2k}$ is a positive matrix and its Perron-Frobenius eigenvalue $\mu(M^{2k})$ is at least $k$.
\end{proposition}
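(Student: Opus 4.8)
The plan is to pass to the directed graph $G$ with vertex set $\{1,\dots,k\}$ having an edge from $j$ to $i$ whenever the entry $m_{ij}$ is positive. Irreducibility of $M$ is exactly the statement that $G$ is strongly connected, and a nonzero diagonal entry $m_{\ell\ell}>0$ says that $G$ has a loop at the vertex $\ell$. Recall that for any $p\geq 1$ the entry $(M^p)_{ab}$ is a sum, over all directed walks of length $p$ from $b$ to $a$ in $G$, of the products of the corresponding entries of $M$; since the entries are nonnegative integers, $(M^p)_{ab}\geq 1$ as soon as at least one such walk exists.

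To see that $M^{2k}$ is positive, fix arbitrary indices $a$ and $b$. In a strongly connected digraph on $k$ vertices any two vertices are joined by a directed path of length at most $k-1$, since a shortest path repeats no vertex. So I would take a path from $b$ to $\ell$, follow the loop at $\ell$ exactly $t$ times, and then take a path from $\ell$ to $a$: the two paths together have length at most $2k-2$, so the choice $t=2k-(\text{their combined length})\geq 2$ yields a walk from $b$ to $a$ of length exactly $2k$. Hence $(M^{2k})_{ab}\geq 1$ for every pair $(a,b)$, i.e.\ $M^{2k}$ has every entry a positive integer.

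For the eigenvalue bound I would use the elementary fact that the Perron--Frobenius eigenvalue of a nonnegative matrix $A$ is at least its smallest row sum: writing $Av=\mu(A)v$ with $v>0$ and $v_m=\min_i v_i$, we get $\mu(A)\,v_m=\sum_j a_{mj}v_j\geq v_m\sum_j a_{mj}$, hence $\mu(A)\geq\min_i\sum_j a_{ij}$. Since each of the $k$ rows of $M^{2k}$ consists of entries that are all at least $1$, every row sum is at least $k$, and therefore $\mu(M^{2k})\geq k$. There is no serious obstacle in this argument; the only points needing care are the walk-length bookkeeping --- one needs a walk of length \emph{exactly} $2k$, which is precisely where the loop at $\ell$ is used --- and recalling the row-sum lower bound for the Perron--Frobenius eigenvalue.
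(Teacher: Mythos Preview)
Your argument is correct and essentially identical to the paper's: both pass to the associated directed graph, use strong connectivity plus the loop at $\ell$ to produce a walk of length exactly $2k$ between any two vertices, and then bound $\mu(M^{2k})$ from below by picking the index where the Perron eigenvector is minimal. The only cosmetic differences are your edge convention (edges $j\to i$ for $m_{ij}>0$ rather than $i\to j$), your slightly sharper path-length estimate $k-1$ in place of $k$, and your phrasing of the eigenvalue bound via the minimum row sum.
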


\begin{proof}
We construct a directed graph $\Gamma$ from $M$ with $k$ vertices $\{i\}_{i=1}^{k}$ such that the number of the directed edge from $i$ to $j$ in $\Gamma$ equals $m_{i,j}$.  We observe that for any $r>0$ the $(i,j)$th entry $m^{(r)}_{i,j}$ of $M^{r}$ is the number of directed edge paths from $i$ to $j$ of length $r$ in $\Gamma$.

Since $M$ is a Perron-Frobenius matrix, we know that $\Gamma$ is path-connected by directed paths.  Suppose $M$ has a nonzero entry at the $(l,l)$th entry, then we will see at least one corresponding loop edge at the vertex $l$.  For any $i$ and $j$ in $\Gamma$, path-connectivity ensures us that there are directed edge paths of length $\leq k$ from $i$ to $l$ and from $l$ to $j$.  This tells us that there is a directed edge path $P$ of length $\leq 2k$ from $i$ to $j$ passing through $l$.  Since we can wrap around the loop edge adjacent to $l$ to increase the length of $P$, there is always a directed edge path of length $2k$ from $i$ to $j$.  In the other words, $m^{(2k)}_{i,j}$ is at least $1$ for all $i$ and $j$, so $M^{2k}$ is a positive matrix.

Let $v$ be the corresponding Perron-Frobenius eigenvector, so that we have $M^{2k}v=\mu(M^{2k})v$.  This implies that if $v=[ v_{1}\cdots v_{k}]^{T}$, for all $i$
\begin{align*}
\sum_{j=1}^{k}m^{(2k)}_{i,j}v_{j}&=\mu(M^{2k})v_{i},
\end{align*}
equivalently,
\begin{align*}
\mu(M^{2k})=\sum_{j=1}^{k}m^{(2k)}_{i,j}\frac{v_{j}}{v_{i}}.
\end{align*}
Choosing $i$ such that $v_{i} \leq v_{j}$ for all $j$, we obtain
\begin{align*}
\mu(M^{2k}) &\geq \sum_{j=1}^{k}m^{(2k)}_{i,j} \geq \sum_{j=1}^{k}1 =k.
\end{align*}
\end{proof}

The following proposition will be used in proving the upper bound.
\begin{proposition}\label{dgraph}
Let $\Gamma$ be the induced directed graph of a integral Perron-Frobenius matrix $M$ with Perron-Frobenius eigenvalue $\mu(M)=\mu$.  The total number of paths of length $d$ from vertex $i$ in $\Gamma$ is denoted by $P_{\Gamma}(i,d)$.  We have
\begin{center}
 $\sqrt[d]{P_{\Gamma}(i,d)}\longrightarrow \mu(M)$ as $d\rightarrow \infty$, $\forall i$.
\end{center}
\end{proposition}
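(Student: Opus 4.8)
The plan is to recognize $P_\Gamma(i,d)$ as the $i$th coordinate of the vector $M^d\1$, and then to trap it between two multiples of $\mu^d$ using the positive Perron-Frobenius eigenvector of $M$. First, recall the observation from the proof of Proposition \ref{lbd}: the $(i,j)$ entry $m^{(d)}_{i,j}$ of $M^d$ counts the directed edge paths of length $d$ from $i$ to $j$ in $\Gamma$. Summing over the terminal vertex $j$ gives
\begin{align*}
P_\Gamma(i,d) = \sum_{j=1}^k m^{(d)}_{i,j} = \big(M^d\1\big)_i, \qquad \1 = [1\cdots 1]^T,
\end{align*}
so the assertion is purely a statement about the exponential growth rate of the coordinates of $M^d\1$.

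Next I would use that $M$, being an integral Perron-Frobenius matrix, has a positive right eigenvector $v = [v_1\cdots v_k]^T$ with $Mv = \mu v$ — this positivity is exactly the irreducibility of $M$ already recorded above. Since $v$ has all positive entries and $k$ is finite, set $a = \min_j v_j^{-1} > 0$ and $b = \max_j v_j^{-1}$, so that $a v \le \1 \le b v$ entrywise. Applying $M^d$, which is nonnegative and so preserves entrywise inequalities, gives $a\mu^d v \le M^d\1 \le b\mu^d v$; reading off the $i$th coordinate,
\begin{align*}
a\, v_i\, \mu^d \;\le\; P_\Gamma(i,d) \;\le\; b\, v_i\, \mu^d.
\end{align*}
Taking $d$th roots yields $(a v_i)^{1/d}\,\mu \le \sqrt[d]{P_\Gamma(i,d)} \le (b v_i)^{1/d}\,\mu$, and since $a v_i$ and $b v_i$ are fixed positive constants, both outer factors tend to $1$ as $d\to\infty$. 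Hence $\sqrt[d]{P_\Gamma(i,d)} \to \mu$ for every $i$.

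I do not expect a serious obstacle here: once $P_\Gamma(i,d)$ is written as $(M^d\1)_i$, everything follows from the existence of the positive eigenvector $v$ and the elementary fact that $c^{1/d}\to 1$ for any fixed $c>0$. (Alternatively one could invoke Gelfand's spectral radius formula $\lim_d \|M^d\|^{1/d} = \mu$ to get the upper estimate, but the eigenvector sandwich handles both bounds at once and is cleaner.) The only minor point needing care is that sandwiching $\1$ between multiples of $v$ uses finiteness of the index set, which is automatic since a Markov partition has finitely many rectangles.
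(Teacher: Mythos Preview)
Your proof is correct and is essentially the same argument as the paper's: both sandwich $P_\Gamma(i,d)=\sum_j m^{(d)}_{i,j}$ between $\frac{v_i}{v_{\max}}\mu^d$ and $\frac{v_i}{v_{\min}}\mu^d$ using the positive Perron--Frobenius eigenvector $v$, then take $d$th roots. Your constants $a=\min_j v_j^{-1}=1/v_{\max}$ and $b=\max_j v_j^{-1}=1/v_{\min}$ match the paper's bounds exactly; the only difference is cosmetic---you phrase the sandwich as $a v\le\1\le b v$ and apply the nonnegative operator $M^d$, whereas the paper manipulates the scalar identity $\sum_j m^{(d)}_{i,j}v_j=\mu^d v_i$ directly.
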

\begin{proof}
Let $M$ be an integral $k\times k$ Perron-Frobenius matrix with Perron-Frobenius eigenvalue $\mu$ and Perron-Frobenius eigenvector $v$.  As above
\begin{align*}
\sum_{j=1}^{k}m^{(d)}_{i,j}v_{j}=\mu(M^{d})v_{i}=\mu^{d}v_{i}.
\end{align*}
Let $v_{max}=\max_{i}\{v_{i} \}$ and $v_{min}=\min_{i}\{v_{i} \}$.  Recall that $v_{i}>0$ for all $i$ by the irreducibility of $M$.  For all $i$ we have
\begin{align*}
\frac{v_{min}\left(\sum_{j} m_{i,j}^{(d)}\right)}{\mu^{d}} &\leq \frac{\sum_{j}m_{i,j}^{(d)}v_{j}}{\mu^{d}}\leq \frac{v_{max}\left(\sum_{j} m_{i,j}^{(d)}\right)}{\mu^{d}}\\
\Rightarrow \frac{v_{i}}{v_{max}}&\leq \frac{\sum_{j} m_{i,j}^{(d)}}{\mu^{d}} \leq\frac{v_{i}}{v_{min}}
\end{align*}
We are done, since $\sum_{j} m_{i,j}^{(d)}=P_{\Gamma}(i,d)$ and for all $i$
\begin{align*}
\sqrt[d]{\frac{v_{i}}{v_{max}}}\rightarrow 1 \text{ and } \sqrt[d]{\frac{v_{i}}{v_{min}}}\rightarrow 1
\text{, as }d \text{ tends to }\infty.
\end{align*}
\end{proof}

\subsection{Lefschetz numbers}
We will review some definitions and properties of Lefschetz numbers.  A more complete discussion can be found in \cite{GP} and \cite{Bott-Tu}.

Let $X$ be a compact oriented manifold, and $f:X \rightarrow X$ be a map.  Define
\begin{align*}
graph(f)=\{(x,f(x))|x\in X\} \subset X \times X
\end{align*}
and let $\Delta$ be the diagonal of $X \times X$.  The algebraic intersection number $I(\Delta,graph(f))$ is an invariant of the homotopy class of $f$, called the {\bf (global) Lefschetz number} of $f$ and it is denoted $L(f)$.  As in \cite{Bott-Tu}, this can be alternatively described by
\begin{align}\label{Lf}
L(f)=\sum_{i\geq 0}(-1)^{i}trace(f_{*}^{(i)}),
\end{align}
where $f_{*}^{(i)}$ is the matrix induced by $f$ acting on $H_{i}(X)=H_{i}(X;\mathbb{R})$.  The Euler characteristic is the self-intersection number of the diagonal $\Delta$ in $X\times X$
\begin{align*}
\chi(X)=I(\Delta,\Delta)=L(id).
\end{align*}

As seen in \cite{GP}, if $f$ has isolated fixed points, we can compute the {\bf local Lefschetz number} of $f$ at a fixed point $x$ in local coordinates as
\begin{align*}
L_{x}(f)=\deg \left( z \mapsto \frac{f(z)-z}{|f(z)-z|} \right),
\end{align*}
where $z$ is on a boundary of a small disk centered at $x$ which contains no other fixed points.  Moreover we can compute the Lefschetz number by summing the local Lefschetz numbers of fixed points
\begin{align*}
L(f)=\sum_{f(x)=x}L_{x}(f).
\end{align*}

This description of $L_{x}(f)$ is given for smooth $f$ in \cite{GP}, but it is equally valid for continuous $f$ since such a map is approximated by smooth maps.  We will be computing the Lefschetz number of a homeomorphism $f:S_{g,n}\rightarrow S_{g,n}$, \textit{ignoring the marked points}.

\begin{proposition}\label{neglef}
If a homeomorphism $f:S_{g,n}\rightarrow S_{g,n}$ is homotopic (not necessarily fixing the marked points) to the identity or a multitwist, then $L(f)=\chi(S_{g,0})=2-2g$.
\end{proposition}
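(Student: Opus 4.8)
The plan is to use the Lefschetz fixed point formula in the form $L(f) = \sum_{i \geq 0} (-1)^i \operatorname{trace}(f_*^{(i)})$ applied to the underlying closed surface $S_{g,0}$, since we are ignoring marked points and the relevant homotopy does not fix them. First I would observe that a homeomorphism $f\co S_{g,n}\to S_{g,n}$, when we forget the marked points, gives a homeomorphism of $S_{g,0}$, and the Lefschetz number depends only on the homotopy class. So it suffices to compute $L(f)$ for $f$ homotopic to the identity or to a multitwist on $S_{g,0}$.

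The main step is the multitwist case. For $f$ the identity the statement is immediate: $f_*^{(i)}$ is the identity matrix on each $H_i(S_{g,0})$, so $L(\mathrm{id}) = \operatorname{rank} H_0 - \operatorname{rank} H_1 + \operatorname{rank} H_2 = 1 - 2g + 1 = 2 - 2g = \chi(S_{g,0})$, which is also noted in the excerpt as $L(\mathrm{id}) = \chi(X)$. For a multitwist $f$, I would argue that the induced maps on homology are still the identity. On $H_0$ and $H_2$ this is clear since $f$ is orientation-preserving and $S_{g,0}$ is connected. The key point is $H_1$: a Dehn twist $T_\gamma$ along a simple closed curve $\gamma$ acts on $H_1(S_{g,0};\mathbb{R})$ by the transvection $x \mapsto x + \langle x, [\gamma]\rangle [\gamma]$, where $\langle\cdot,\cdot\rangle$ is the algebraic intersection form. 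A product of such transvections along disjoint curves need not be the identity on $H_1$ in general — so here I must be careful. The cleaner route is to use the \emph{homotopy} hypothesis: $f$ is homotopic (freely, ignoring marked points) to the identity or a multitwist \emph{as a map $S_{g,n}\to S_{g,n}$}. If the conclusion is that such an $f$ has $L(f) = \chi(S_{g,0})$, then either (a) the homotopy class on $S_{g,0}$ is that of the identity, giving the result directly, or (b) one computes $\operatorname{trace}$ of the transvection product explicitly. Since each transvection $T_{[\gamma]}$ on a $2g$-dimensional space has the form $I + N$ where $N$ has rank one with $N^2$ of the form $\langle[\gamma],[\gamma]\rangle N = 0$ (self-intersection is zero), each individual transvection has trace $2g$. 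For a product along pairwise disjoint curves, successive transvections compose, but I would show the total trace remains $2g$ by noting that the nilpotent parts are supported on the Lagrangian-type subspace spanned by the $[\gamma_i]$, on which all the $N_i$ act trivially (disjoint curves have zero intersection, so $\langle[\gamma_i],[\gamma_j]\rangle = 0$), hence the product is $I + \sum_i c_i N_i$ with each $N_i$ trace zero, giving $\operatorname{trace}(f_*^{(1)}) = 2g$.

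Assembling: $L(f) = \operatorname{trace}(f_*^{(0)}) - \operatorname{trace}(f_*^{(1)}) + \operatorname{trace}(f_*^{(2)}) = 1 - 2g + 1 = 2-2g = \chi(S_{g,0})$, as desired. I would then remark that $\chi(S_{g,0}) = 2-2g$ by the classification of surfaces, completing the proof.

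I expect the main obstacle to be the $H_1$ computation for a genuine multitwist: one has to be sure the transvections along disjoint curves, though individually nontrivial, still compose to a map of trace $2g$. The point that saves us is that disjointness forces all pairwise algebraic intersection numbers to vanish, so the rank-one nilpotent summands $N_i$ annihilate each other's images, and the product is unipotent with trace equal to $\dim H_1(S_{g,0}) = 2g$. Once that linear-algebra fact is pinned down, the rest is bookkeeping with the Lefschetz trace formula.
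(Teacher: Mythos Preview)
Your argument is correct and follows the same overall strategy as the paper --- homotopy invariance for the identity case, and the trace formula on $H_1(S_{g,0})$ via the transvection description of Dehn twists for the multitwist case --- but your execution of the $H_1$ computation is more streamlined. The paper proceeds by discarding separating curves, selecting a maximal homologically independent subcollection $\{\gamma_1,\dots,\gamma_s\}$, extending to a symplectic basis $\{\alpha_i,\beta_i\}$ with $\alpha_i=[\gamma_i]$, and then computing $f_*^{(1)}$ entry by entry in two cases ($s=k$ and $s<k$) to see that the matrix is unipotent upper triangular, hence has trace $2g$. You instead observe directly that each twist contributes a rank-$\leq 1$ operator $N_i\colon x\mapsto n_i\langle x,[\gamma_i]\rangle[\gamma_i]$, that disjointness forces $N_jN_i=0$ for all $i,j$ (since the image of $N_i$ lies in $\mathbb{R}[\gamma_i]$ and $\langle[\gamma_i],[\gamma_j]\rangle=0$), so the product collapses to $I+\sum_i N_i$, and that each $N_i$ has trace $\langle[\gamma_i],[\gamma_i]\rangle=0$. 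This avoids the basis construction and the case split entirely, at the cost of relying on the small linear-algebra fact that $\operatorname{trace}(x\mapsto\phi(x)v)=\phi(v)$; both approaches deliver $\operatorname{trace}(f_*^{(1)})=2g$ and hence $L(f)=2-2g$.
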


A {\it multitwist} is a composition of powers of Dehn twists on pairwise disjoint simple essential closed curves.

\begin{proof}
If $f$ is homotopic to the identity, the homotopy invariance of the Lefschetz number tells us $L(f)=L(id)=I(\Delta,\Delta)$ which is $\chi(S_{g,0})$.

Suppose $f$ is homotopic to a multitwist.  We will use (\ref{Lf}) to compute $L(f)$.  Note that $H_{i}(S_{g,0})$ is $0$ for $i\geq3$, $H_{0}(S_{g,0})\cong H_{2}(S_{g,0})\cong \mathbb{R}$ and $f_{*}^{(i)}$ is the identity when $i=0$ or $2$, so this implies $L(f)=2-trace(f_{*}^{(1)})$.  

There exists a set $\{\gamma_{i}\}_{i=1}^{k}$ of disjoint simple essential closed curves with some integers $n_{i}\neq0$ such that
\begin{align*}
f\simeq T^{n_{1}}_{\gamma_{1}}\circ \cdots \circ T^{n_{k}}_{\gamma_{k}},
\end{align*}
where $T^{n_{i}}_{\gamma_{i}}$ is the $n_{i}$-th power of a Dehn twist along $\gamma_{i}$.

For any curve $\gamma$,
\begin{align*}
T^{n_{i}}_{\gamma_{i}*}([\gamma])=[\gamma]+n_{i}\langle \gamma,\gamma_{i} \rangle [\gamma_{i}],
\end{align*}
where $[\gamma]$ is the homology class of $\gamma$ and $\langle \gamma,\gamma_{i} \rangle$ is the algebraic intersection number of $[\gamma]$ and $[\gamma_{i}]$.  If any $\gamma_{i}$ is a separating curve, then $[\gamma_{i}]$ is the trivial homology class and $T^{n_{i}}_{\gamma_{i}*}$ acts trivially on $H_{1}(S_{g,0})$.  We may therefore assume that each $\gamma_{i}$ is nonseparating.  After renaming the curves, we can assume that there is a subset $\{ \gamma_{1},\gamma_{2},\cdots,\gamma_{s}\}$ such that $\widehat{\gamma}=\bigcup_{i=1}^{s}\gamma_{i}$ is nonseparating and $\widehat{\gamma}\cup \gamma_{j}$ is separating for all $j>s$.  Thus, for all $k\geq j>s$
\begin{align*}
[\gamma_{j}]=\sum_{i=1}^{s}c_{ji}[\gamma_{i}],
\end{align*}
for some constants $c_{ji}\in \mathbb{R}$.  We can extend $\{[\gamma_{i}]\}_{i=1}^{s}$ to a basis of $H_{1}(S_{g,0})$,
\begin{align*}
\{\alpha_{1},\alpha_{2},\cdots,\alpha_{g},\beta_{1},\beta_{2},\cdots,\beta_{g}\}
\end{align*}
where $[\gamma_{i}]=\alpha_{i}$ for $i\leq s \leq g$ and $\langle \alpha_{i},\beta_{j} \rangle=\delta_{ij}$, $\langle \alpha_{i},\alpha_{j} \rangle=\langle \beta_{i},\beta_{j} \rangle=0$.

First suppose $s=k$, then $\langle \alpha_{j},\gamma_{i} \rangle=\langle \alpha_{j},\alpha_{i} \rangle=0$ for all $i$ and $j$.  Therefore, for all $j$
\begin{align*}
f_{*}^{(1)}(\alpha_{j})&=\alpha_{j} \\
f_{*}^{(1)}(\beta_{j})&=\beta_{j}+\sum_{i=1}^{k}n_{i}\langle \beta_{j},\gamma_{i} \rangle [\gamma_{i}]=\beta_{j}+\sum_{i=1}^{k}n_{i}\langle \beta_{j},\alpha_{i} \rangle \alpha_{i}=\beta_{j}-n_{j}\alpha_{j}.
\end{align*}
So we have
\begin{align*}
f_{*}^{(1)}=
\left( \begin{array}{c|c}
 I_{g\times g} &*\\  \hline
 0 & I_{g\times g}\\
  \end{array} \right)
\end{align*}
and $L(f)=2-trace(f_{*}^{(1)})=2-2g$.

For $s<k$, we will have
\begin{align*}
f_{*}^{(1)}(\alpha_{j})&=\alpha_{j}+\sum_{i=1}^{k}n_{i}\langle \alpha_{j},\gamma_{i} \rangle [\gamma_{i}]\\
             &=\alpha_{j}+\sum_{i=1}^{s}n_{i}\langle \alpha_{j},\alpha_{i} \rangle \alpha_{i}+\sum_{i=s+1}^{k}n_{i}\langle \alpha_{j},\gamma_{i} \rangle [\gamma_{i}]\\
             &=\alpha_{j}+\sum_{i=s+1}^{k}n_{i}\sum_{t=1}^{s}c_{it}\langle \alpha_{j},\gamma_{t} \rangle[\gamma_{t}]\\
             &=\alpha_{j}+\sum_{i=s+1}^{k}n_{i}\sum_{t=1}^{s}c_{it}\langle \alpha_{j},\alpha_{t} \rangle\alpha_{t}\\
             &=\alpha_{j}
\end{align*}
and
\begin{align*}
f_{*}^{(1)}(\beta_{j})&=\beta_{j}+\sum_{i=1}^{k}n_{i}\langle \beta_{j},\gamma_{i} \rangle [\gamma_{i}]\\
            &=\beta_{j}+\sum_{i=1}^{s}n_{i}\langle \beta_{j},\gamma_{i} \rangle [\gamma_{i}]+\sum_{i=s+1}^{k}n_{i}\sum_{t=1}^{s}c_{it}\langle \beta_{j},\gamma_{t} \rangle[\gamma_{t}]\\
            &=\beta_{j}+\sum_{i=1}^{s}n_{i}\langle \beta_{j},\alpha_{i} \rangle \alpha_{i}+\sum_{i=s+1}^{k}n_{i}\sum_{t=1}^{s}c_{it}\langle \beta_{j},\alpha_{t} \rangle\alpha_{t}\\
            &=\begin{cases}\beta_{j}, \text{ if } j>s,\\ \beta_{j}-n_{j}\alpha_{j}-\sum_{i=s+1}^{k}n_{i}c_{ij}\alpha_{j}, \text{ if } j\leq s.\end{cases}
\end{align*}
Therefore, the diagonal of the matrix $f_{*}^{(1)}$ is still all $1$'s and $L(f)=2-trace(f_{*}^{(1)})=2-2g$.
\end{proof}

\section{Bounding the dilatation from below}\label{lowerbd}

\begin{lemma}\label{intsect}
For any pseudo-Anosov element $f \in \Mod(S_{g,n})$ equipped with a Markov partition, if $L(f)<0$, then there is a rectangle $R$ of the Markov partition, such that the interiors of $f(R)$ and $R$ intersect.
\end{lemma}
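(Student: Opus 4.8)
The plan is to establish the contrapositive: assuming $\text{int}(f(R_i))\cap\text{int}(R_i)=\emptyset$ for every rectangle $R_i$ of the Markov partition, I will show $L(f)\ge 0$. A pseudo-Anosov homeomorphism has only finitely many fixed points, all isolated, and this persists after forgetting the marked points, so $L(f)=\sum_{x}L_x(f)$, the sum running over the fixed points of $f$ on $S_{g,0}$. Under the standing assumption I will show that each $L_x(f)$ equals $+1$, whence $L(f)=\#\{x:f(x)=x\}\ge 0$.

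First I would note that $f$ has no fixed point in the interior of a rectangle: if $x=f(x)\in\text{int}(R_i)$ then $x=f(x)\in f(\text{int}(R_i))=\text{int}(f(R_i))$, so $\text{int}(R_i)\cap\text{int}(f(R_i))\ne\emptyset$, contrary to hypothesis. Hence every fixed point $x$ lies on the $1$-skeleton $\bigcup_i\partial R_i$. Near such an $x$ the two transverse singular foliations exhibit $x$ as a $p$-pronged point ($p=2$ at a regular point, $p\ge 1$ in general), and the $2p$ separatrices through $x$ divide a small disk around $x$ into $2p$ sectors, each of which is a germ at $x$ of $\text{int}(R_i)$ for one of the rectangles $R_i$ containing $x$ in its closure (the rectangles tile $S$, so each such germ lies in exactly one rectangle interior). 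Since $f$ is orientation-preserving, fixes $x$, and carries each foliation to itself, it acts on the circle of directions at $x$ by an orientation-preserving homeomorphism permuting the $2p$ marked separatrix-directions; such a map cyclically rotates the $2p$ sectors, hence either fixes all of them or fixes none.

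The crux is to rule out the possibility that $f$ fixes all the sectors at some fixed point $x$ (which would make $x$ a ``saddle-type'' fixed point of negative index). If $f$ fixed the sector coming from a rectangle $R_i\ni x$, then $f$ would map a germ of $\text{int}(R_i)$ at $x$ into itself; thus points of $\text{int}(R_i)$ arbitrarily close to $x$ would have their images in $\text{int}(R_i)$, while those images also lie in $\text{int}(f(R_i))$, contradicting the hypothesis. So at every fixed point $f$ rotates the sectors nontrivially; equivalently it fixes no local separatrix, so $x$ is at least $2$-pronged and $f$ cyclically permutes the unstable prongs at $x$. For a pseudo-Anosov homeomorphism such a fixed point is ``rotation-like'' — after a local coordinate change and up to a homotopy fixing the germ at $x$, $f$ has the form $z\mapsto\zeta z$ with $\zeta\ne 1$ — and therefore $L_x(f)=+1$; the only fixed points of negative index are the $p$-pronged ones at which all prongs are fixed, of index $1-p$ (see e.g. \cite{FLP}). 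Summing over the fixed points gives $L(f)\ge 0$, as required.

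The step I expect to be the main obstacle is the uniformity of the local picture in the second and third paragraphs: checking, whether $x$ is a regular point, a $\ge 3$-pronged singularity, a marked point, or a corner or edge point of the partition, that the rectangles through $x$ really do occupy the foliation-sectors at $x$, that a fixed sector forces a rectangle interior to re-enter itself, and that no single rectangle occupies several sectors at once (which would again be excluded by the same re-entry argument). A secondary point requiring care is the quoted local index computation for pseudo-Anosov fixed points, together with the standard finiteness and isolation of such fixed points, which is what legitimizes writing $L(f)=\sum_x L_x(f)$.
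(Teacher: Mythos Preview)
Your argument is correct and follows essentially the same route as the paper. You argue by contrapositive where the paper argues directly, but the two case analyses coincide: your dichotomy ``all sectors fixed at $x$'' versus ``no sector fixed'' is exactly the paper's split between (i) nonsingular fixed points with the transverse orientation of $\mathcal F^{s}$ reversed or singular fixed points with no fixed separatrix (where the paper computes $L_x(f)=+1$ via an explicit degree argument, and you cite the standard index formula), and (ii) the remaining ``saddle-type'' fixed points, where the paper invokes Figure~1 to exhibit a rectangle $R$ with $\mathrm{int}(f(R))\cap\mathrm{int}(R)\neq\emptyset$, while you obtain the same intersection from the fixed-sector germ. The only cosmetic difference is that the paper supplies the $L_x(f)=+1$ calculation in full rather than citing it.
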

\begin{proof}
Since $f$ is a pseudo-Anosov homeomorphism, it has isolated fixed points.  Suppose $x$ is an isolated fixed point of $f$ such that one of the following happens:
\begin{enumerate}
\item $x$ is a nonsingular fixed point and the local transverse orientation of ${\cal F}^{s}$ is reversed.
\item $x$ is a singular fixed point and no separatrix of ${\cal F}^{s}$ emanating from $x$ is fixed.
\end{enumerate}

\begin{claim}
$L_{x}(f)=+1$.
\end{claim}

Let $B$ be a small disk centered at $x$ containing no other fixed point of $f$.
First we show that (in local coordinates) for every $z\in \partial B$, $f(z)-z \neq \alpha z$ for all $\alpha>0$.

It is easy to verify this in case 1 by choosing local coordinates $(\xi_{1},\xi_{2})$ around $x$ so that $f$ is given by
\begin{align*}
f(\xi_{1},\xi_{2})=(-\lambda\xi_{1},\frac{-1}{\lambda}\xi_{2}).
\end{align*}

In case 2, we choose local coordinates around $x$ such that the separatrices of ${\cal F}^{s}$ emanating from $x$ are sent to rays from $0$ through the $k$th roots of unity in $\mathbb{R}^{2}$.  This means $f$ rotates each of the sectors bounded by these rays through an angle $\frac{2\pi j}{k}$ for some $j=1,\cdots,k-1$, and so for all $z\in \partial B$ $f(z)-z \neq \alpha z$  for all $\alpha>0$.

Define a smooth map $h_{0}: \partial B \rightarrow  S^{1}$ by $h_{0}(z)= \frac{f(z)-z}{|f(z)-z|}$, so $L_{x}(f)=\deg(h_{0})$ by definition.  Let $g:\partial B \rightarrow  S^{1}$ be defined by $g(z)= \frac{z}{|z|}$ and $h_{1}:S^{1} \rightarrow S^{1}$ be defined by $h_{1}(\frac{z}{|z|})=\frac{f(z)-z}{|f(z)-z|}$, so that $h_{0}=h_{1}g$.  Then
\begin{align*}
L_{x}(f)=\deg(h_{0})=\deg(h_{1}g)=\deg(h_{1})\deg(g)=\deg(h_{1})
\end{align*}
since $\deg(g)=1$.  Note that $h_{1}$ has no fixed point since for all $z\in \partial B$, $f(z)-z \neq \alpha z$ for all $\alpha>0$.  Therefore $L_{x}(f)=\deg(h_{1})=(-1)^{(1+1)}=+1$.

The assumption of $L(f)<0$ implies that there exists a fixed point $x$ of $f$ which is in neither of the cases above.  In other words, it falls into one of the cases in Figure 1.  As seen in Figure 1, there is a rectangle $R$ of the Markov partition such that the interiors of $f(R)$ and $R$ intersect.
\begin{figure}[htbp]
\begin{center}
\psfrag{.}{.}
\psfrag{x}{$x$}
\includegraphics[width=1\textwidth]{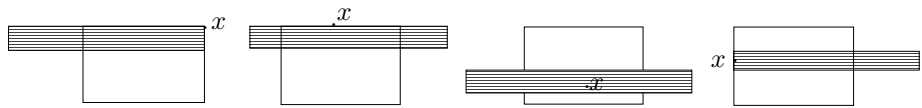}
\caption{The intersection of $f(R)$ and $R$.  $R$ is the underlying rectangle and $f(R)$ is the shaded rectangle.}
\end{center}
\end{figure}
\end{proof}

Let $\Gamma_{S}(3)\vartriangleleft \Mod(S)$ denote the kernel of the action on $H_{1}(S; \mathbb{Z}/3\mathbb{Z})$, where $S=S_{g,0}$.  In \cite{Iv2}, it is shown that $\Gamma_{S}(3)$ consists of pure mapping classes.  Setting $\Theta(g)=[\Mod(S) : \Gamma_{S}(3)]$, we conclude the following.
\begin{lemma}\label{theta}
Let $f\in \Mod(S_{g,n})$ be a pseudo-Anosov element and $\widehat{f}\in \Mod(S_{g,o})$ be the induced mapping class obtained by forgetting marked points.  There exists a constant $1\leq \alpha \leq \Theta(g)$ such that $\widehat{f}^{\alpha}$ satisfies exactly one of the following:
\begin{enumerate}
\item $\widehat{f}^{\alpha}$ restricts to a pseudo-Anosov map on a connected subsurface.
\item $\widehat{f}^{\alpha}=$Id.
\item $\widehat{f}^{\alpha}$ is a multitwist map.
\end{enumerate}
\end{lemma}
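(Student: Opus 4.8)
The plan is to pass from $\widehat f$ to a power lying in the pure‑mapping‑class subgroup $\Gamma_{S}(3)$ and then appeal to the structure theory of pure mapping classes.

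First, since $H_{1}(S;\mathbb Z/3\mathbb Z)$ is finite, the quotient $G=\Mod(S)/\Gamma_{S}(3)$ is a finite group of order $\Theta(g)$. Let $\bar f\in G$ be the image of $\widehat f$. By Lagrange's theorem the order $\alpha$ of $\bar f$ divides $\Theta(g)$, so $1\le\alpha\le\Theta(g)$ and $\widehat f^{\,\alpha}\in\Gamma_{S}(3)$. By the cited result of Ivanov, $\widehat f^{\,\alpha}$ is therefore a pure mapping class: there is a (possibly empty) canonical reduction system $\sigma$, consisting of pairwise disjoint essential simple closed curves, such that $\widehat f^{\,\alpha}$ fixes each curve of $\sigma$, fixes each component of $S\setminus\sigma$, acts on each such component as the identity or as a pseudo-Anosov, and acts near each curve of $\sigma$ as a (possibly trivial) power of a Dehn twist; moreover, since $\sigma$ is the canonical reduction system, if $\widehat f^{\,\alpha}$ is the identity on every component of $S\setminus\sigma$ then the twisting power along each curve of $\sigma$ is nonzero.

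Next I would split into cases. If $\widehat f^{\,\alpha}$ acts as a pseudo-Anosov on some component $Y$ of $S\setminus\sigma$ (where $Y=S$ when $\sigma=\emptyset$), then $Y$ is connected and this is conclusion (1). Otherwise $\widehat f^{\,\alpha}$ restricts to the identity on every component of $S\setminus\sigma$; if $\sigma=\emptyset$ this says $\widehat f^{\,\alpha}=\mathrm{Id}$, which is conclusion (2), and if $\sigma\neq\emptyset$ then $\widehat f^{\,\alpha}$ is supported in a neighborhood of $\sigma$ and equals a product of nonzero powers of Dehn twists along the curves of $\sigma$, i.e.\ a nontrivial multitwist, which is conclusion (3). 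These conclusions are mutually exclusive: a pseudo-Anosov restriction to a connected invariant subsurface is available to neither the identity nor a multitwist, since each of the latter restricts to the identity or to a multitwist on every invariant subsurface, and a nontrivial multitwist is not the identity.

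The step requiring the most care is the structure theorem for pure mapping classes, together with checking that the trichotomy is both exhaustive and exclusive — in particular, that the pseudo-Anosov case really does exclude (2) and (3), and that in the remaining case with $\sigma\neq\emptyset$ all of the twist exponents are nonzero, which is exactly the point where one uses that $\sigma$ is the canonical (hence minimal) reduction system rather than an arbitrary one. The remaining ingredients — finiteness of $G$, the bound $\alpha\le\Theta(g)$, and purity of the elements of $\Gamma_{S}(3)$ — follow at once from the definitions and the quoted results.
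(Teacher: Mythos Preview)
Your argument is correct and is exactly the approach the paper has in mind: the paper does not give a separate proof of this lemma but simply observes, in the paragraph preceding it, that $\Gamma_{S}(3)$ has finite index $\Theta(g)$ in $\Mod(S_{g,0})$ and consists of pure mapping classes, and then states the lemma as an immediate consequence. Your proposal fills in precisely the omitted details---the Lagrange bound on $\alpha$ and the case analysis coming from the canonical reduction system of a pure element---so it matches the paper's route with nothing further needed.
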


\begin{remark}
For the first two cases of Lemma \ref{theta}, one can find $\alpha$ bounded by a linear function of $g$, but in case 3, $\alpha$ may be exponential in $g$.
\end{remark}

\begin{theorem}\label{mainthm}
For $g\geq 2$, given any pseudo-Anosov $f \in \Mod(S_{g,n})$, let $1\leq \alpha\leq \Theta(g)$ be as in Lemma \ref{theta}.  Then
\begin{align*}
\log \lambda(f) \geq \min \left\{ \frac{\log2}{\alpha(12g-12)} \text{ , } \frac{\log(18g+6n-18)}{2\alpha(18g+6n-18)} \right\}.
\end{align*}

\end{theorem}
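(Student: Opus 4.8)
The plan is to bound $\log\lambda(f^{\alpha}) = \alpha\log\lambda(f)$ from below in each of the three cases of Lemma \ref{theta} and then divide by $\alpha$. Note first that $f^{\alpha}$ is again pseudo-Anosov, with the same invariant foliations and $\lambda(f^{\alpha})=\lambda(f)^{\alpha}$, so the machinery of Section \ref{secMarkov} applies to it.

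\emph{Cases 2 and 3: $\widehat{f}^{\alpha}$ is the identity or a multitwist.} After forgetting marked points, $f^{\alpha}$ is homotopic on $S_{g,0}$ to the identity or to a multitwist, so Proposition \ref{neglef} gives $L(f^{\alpha}) = 2-2g < 0$ (using $g\geq 2$). Equip $f^{\alpha}$ with a Markov partition; by Theorem \ref{markov} we may take it to have $k \leq -9\chi(S_{g,n})-3n = 18g+6n-18$ rectangles, with integral Perron--Frobenius transition matrix $M$ and $\mu(M)=\lambda(f^{\alpha})$. Since $L(f^{\alpha})<0$, Lemma \ref{intsect} produces a rectangle $R$ with $\mathrm{int}(f^{\alpha}(R))\cap\mathrm{int}(R)\neq\emptyset$, so the corresponding diagonal entry of $M$ is at least $1$. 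Proposition \ref{lbd} then gives $\mu(M^{2k})\geq k$, i.e. $\lambda(f^{\alpha})^{2k}\geq k$, whence
\begin{align*}
\log\lambda(f^{\alpha}) \;\geq\; \frac{\log k}{2k}.
\end{align*}
The function $x\mapsto\frac{\log x}{2x}$ is decreasing for $x\geq 3$ and $k\leq 18g+6n-18$, so when $k\geq 3$ we obtain $\log\lambda(f^{\alpha})\geq\frac{\log(18g+6n-18)}{2(18g+6n-18)}$; dividing by $\alpha$ yields the second term of the minimum. The remaining possibilities $k\in\{1,2\}$ force $\lambda(f^{\alpha})$ above a definite constant $>1$, and a direct comparison then shows $\log\lambda(f)$ already dominates $\frac{\log2}{\alpha(12g-12)}$, so they cause no trouble.

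\emph{Case 1: $\widehat{f}^{\alpha}$ restricts to a pseudo-Anosov $h$ on a connected subsurface $W$.} The crucial point is that $\lambda(h)\leq\lambda(f^{\alpha})$, i.e. that forgetting marked points and then passing to a pseudo-Anosov component of the canonical reduction cannot increase the dilatation. I would argue this via Teichm\"uller theory: the forgetful map $\mathrm{Teich}(S_{g,n})\to\mathrm{Teich}(S_{g,0})$ is holomorphic, hence weakly contracting for the Teichm\"uller metric (Royden), which bounds the stable translation length of $\widehat{f}^{\alpha}$ on $\mathrm{Teich}(S_{g,0})$ by $\log\lambda(f^{\alpha})$; conversely, evaluating that stable translation length against the extremal length of a curve carried by $W$ (via Kerckhoff's formula) shows it is at least $\log\lambda(h)$. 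Now write $W\cong S_{g',b}$, viewing the $b$ boundary curves as punctures; since $W$ is a subsurface of the closed surface, $2g'-2+b=-\chi(W)\leq-\chi(S_{g,0})=2g-2$ and $g'\leq g$, so $12g'-12+4b\leq 12g-12$. Applying Penner's lower bound (recalled in the introduction) to the pseudo-Anosov induced by $h$ on $S_{g',b}$ gives $\log\lambda(h)\geq\frac{\log2}{12g'-12+4b}\geq\frac{\log2}{12g-12}$, and therefore $\log\lambda(f)=\frac1\alpha\log\lambda(f^{\alpha})\geq\frac1\alpha\log\lambda(h)\geq\frac{\log2}{\alpha(12g-12)}$, the first term of the minimum.

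In every case $\log\lambda(f)$ is at least one of the two displayed quantities, hence at least their minimum, which is the assertion. The one real obstacle is the inequality $\lambda(h)\leq\lambda(f^{\alpha})$ in Case 1: it is geometrically plausible but needs the contraction property of the forgetful map on Teichm\"uller space together with the identification of the Teichm\"uller stable translation length of a reducible mapping class with the largest dilatation among its pseudo-Anosov pieces. Everything else is an assembly of Lemma \ref{intsect}, Proposition \ref{lbd}, Theorem \ref{markov}, Proposition \ref{neglef}, and Penner's inequality.
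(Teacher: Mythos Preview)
Your proof is correct and follows the paper's strategy: split according to Lemma~\ref{theta}, apply Penner's bound on the pseudo-Anosov subsurface in Case~1, and combine Proposition~\ref{neglef}, Lemma~\ref{intsect}, Theorem~\ref{markov}, and Proposition~\ref{lbd} in Cases~2 and~3. The only noteworthy differences are cosmetic: the paper takes the Markov partition for $f$ itself (so that $M^{\alpha}$ is the transition matrix of $f^{\alpha}$ with the required diagonal entry) rather than for $f^{\alpha}$, and in Case~1 it simply records $\log\lambda(f)\geq\log\lambda(\widehat f)$ --- this follows directly from the fact that forgetting marked points cannot increase the growth rate of geometric intersection numbers of curves, so the Teichm\"uller-contraction argument you invoke, while valid, is heavier machinery than needed.
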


\begin{proof}
We will deal with case 1 of Lemma \ref{theta} first.

If $\widehat{f}^{\alpha}$ restricts to a pseudo-Anosov homeomorphism on a connected subsurface $\sum_{g_{0},n_{0}}$ of $S_{g,0}$ of genus $g_{0}$ with $n_{0}$ boundary components (we have $2g_{0}+n_{0} \leq 2g$), then Penner's lower bound tells us
\begin{align*}
\lambda(\widehat{f}^{\alpha})\geq \frac{\log2}{12g_{0}-12+4n_{0}} \geq \frac{\log2}{12g-12}.
\end{align*}
Hence $\log \lambda(f)\geq \log \lambda(\widehat{f})> \frac{\log2}{\alpha(12g-12)}$.

If $\widehat{f}^{\alpha}$ is homotopic to the identity or a multitwist map, from Proposition \ref{neglef}, we have $L(f^{\alpha})=L(\widehat{f}^{\alpha})=\chi(S_{g,0})=2-2g<0$.  Theorem \ref{markov} tells us that for any pseudo-Anosov $f$ there is a Markov partition with $k$ rectangles, where $k\leq-9 \chi (S)-3n$.  Recall that the transition matrix $M$ obtained from the rectangles is a $k \times k$ Perron-Frobenius matrix and the Perron-Frobenius eigenvalue $\mu(M)$ equals $\lambda(f)$.

By Lemma \ref{intsect}, there is a rectangle $R$ such that the interiors of $f^{\alpha}(R)$ and $R$ intersect. This implies that there is a nonzero entry on the diagonal of $M^{\alpha}$.  Applying Proposition \ref{lbd}, we obtain that $\mu((M^{\alpha})^{2k})=\mu(M^{2k\alpha})$ is at least $k$, so we have $(\lambda(f))^{2k\alpha}=\lambda(f^{2k\alpha})=\mu(M^{2k\alpha}) \geq k$.

One can easily check $\frac{\log x}{x}$ is monotone decreasing for $x\geq 3$.  Since $3\leq k \leq 18g+6n-18$, we have
\begin{align*}
\log \lambda(f) \geq \frac{\log k}{2\alpha k} \geq \frac{\log(18g+6n-18)}{2\alpha(18g+6n-18)}.
\end{align*}
\end{proof}

\begin{remark}
Penner's proof in \cite{Pe} does not use Lefschetz numbers which we used to conclude that $\mu(M^{2k\alpha})$ is at least $k$, so we obtain a sharper lower bound for $n\gg g$.
\end{remark}

\section{An example which provides an upper bound}\label{upperbd}

\subsection{For the case genus $g=2$}

In this section, we will construct a pseudo-Anosov $f \in \Mod(S_{2,n})$ for all $n\geq 31$ then we compute its dilatation which gives us an upper bound for $l_{2,n}$.

Let $S_{0,m+2}$ be a genus $0$ surface with $m+2$ marked points (i.e. a marked sphere), and we recall an example of pseudo-Anosov $\phi \in \Mod(S_{0,m+2})$ in \cite{HK}.  We view $S_{0,m+2}$ as a sphere with $s+1$ marked points $X$ circling an unmarked point $x$ and $t+1$ marked points $Y$ circling an unmarked point $y$, and a single extra marked point $z$.  We can also draw this as a ``turnover'', as in Figure \ref{turnover}.  Note that $|X \cap Y|=1$, $|X|=s+1$, $|Y|=t+1$ and $m=s+t$.
\begin{figure}[htbp]
\begin{center}
\psfrag{y}{$y$}
\psfrag{x}{$x$}
\psfrag{z}{$z$}
\psfrag{=}{$\cong$}
\includegraphics[width=0.9\textwidth]{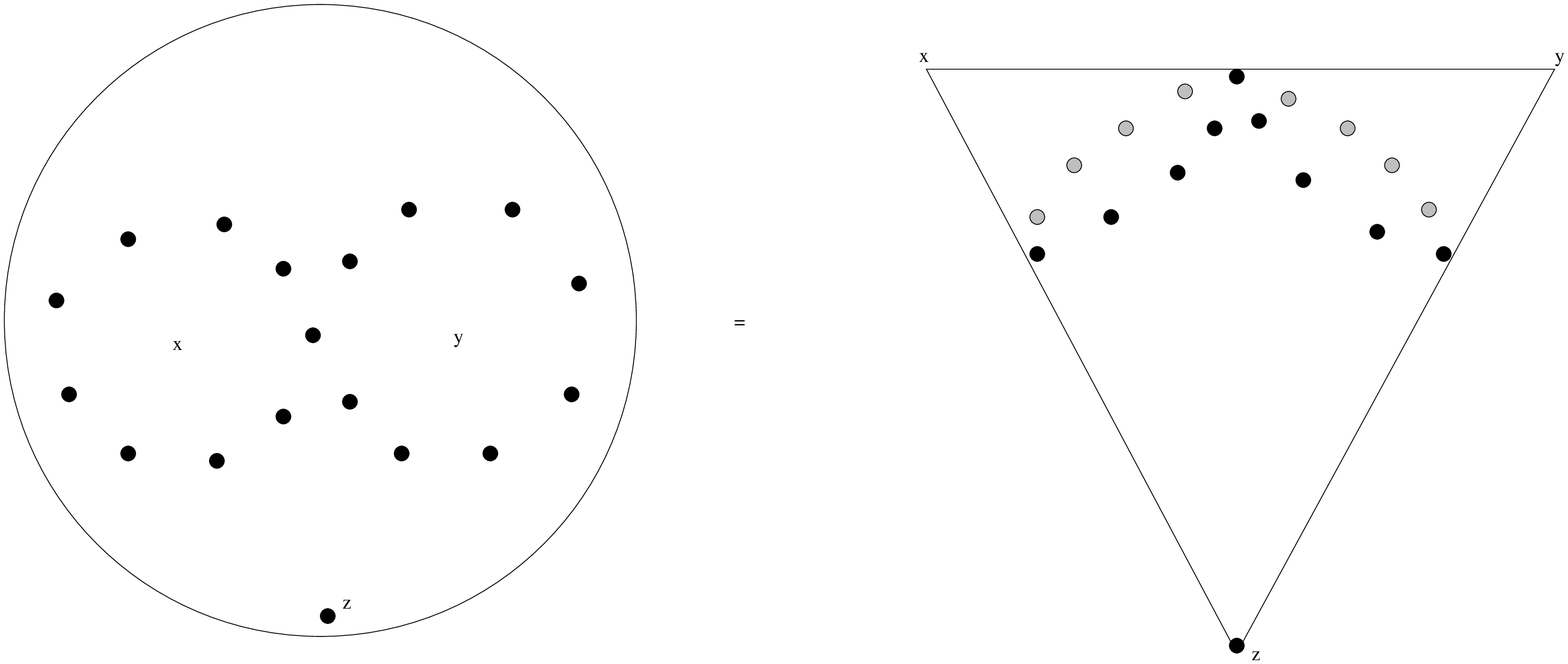}
\caption{Two way of viewing a marked sphere.  Black dots are marked points and the shaded dots on the right are marked points at the back.}\label{turnover}
\end{center}
\end{figure}

We define homeomorphisms $\alpha_{s}$, $\beta_{t}:S_{0,m+2}\rightarrow S_{0,m+2}$ such that $\alpha_{s}$ rotates the marked points of $X$ counterclockwise around $x$ and $\beta_{t}$ rotates the marked points of $Y$ clockwise around $y$; see Figure \ref{phi}.  Define $\phi_{s,t} := \beta_{t}\alpha_{s}$. 
\begin{figure}[htbp]
\begin{center}
\psfrag{y}{$y$}
\psfrag{x}{$x$}
\psfrag{z}{$z$}
\psfrag{f1}{$\alpha_{s}$}
\psfrag{f2}{$\beta_{t}$}
\includegraphics[width=0.9\textwidth]{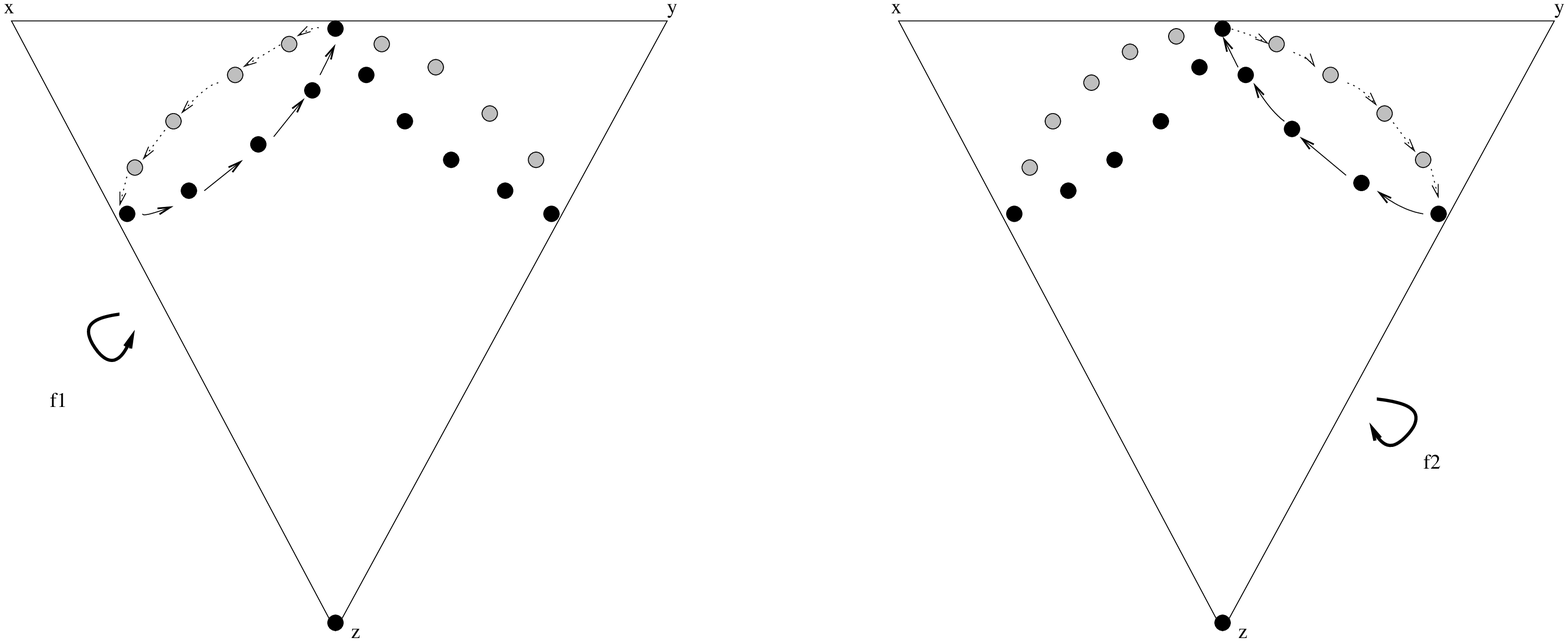}
\caption{Homeomorphisms $\alpha_{s}$ and $\beta_{t}$.}\label{phi}
\end{center}
\end{figure}
In \cite{HK}, it is shown that $\phi_{s,t}$ is pseudo-Anosov by checking it satisfies the criterion of \cite{BH}.  We also note that from this one can check that $x$, $y$ and $z$ are fixed points of a pseudo-Anosov representative of $\phi_{s,t}$. Moreover, for $s$, $t \geq 1$ the dilatation of $\phi_{s,t}$ equals the largest root of the polynomial
\begin{align*}
T_{s,t}(x)&=x^{t+1}(x^{s}(x-1)-2)+x^{s+1}(x^{-s}(x^{-1}-1)-2) \\
           &=(x-1)x^{(s+t+1)}-2(x^{s+1}+x^{t+1})-(x-1).
\end{align*}
The dilatation is minimized when $s=\lfloor \frac{m}{2}\rfloor$ and $t= \lceil \frac{m}{2}\rceil$.  Let us define $\phi:=\phi_{\lfloor \frac{m}{2}\rfloor \text{,} \lceil \frac{m}{2}\rceil}$ and its dilatation is the largest root of the polynomial 
\begin{align*}
T_{m}(x)&:=T_{\lfloor \frac{m}{2}\rfloor \text{,} \lceil\frac{m}{2}\rceil}(x)\\
&=(x-1)x^{(m+1)}-2\left( x^{\lfloor \frac{m}{2}\rfloor+1}+x^{\lceil\frac{m}{2}\rceil+1}\right)-(x-1).
\end{align*}

\begin{proposition}\label{lroot}
If $m\geq 5$, then the largest real root of $T_{m}(x)$ is bounded above by $m^{\frac{3}{m}}$.
\end{proposition}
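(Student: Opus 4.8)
The plan is to show that $T_m(m^{3/m}) > 0$ and that $m^{3/m}$ exceeds the point beyond which $T_m$ is increasing, so that the largest real root must lie below $m^{3/m}$. First I would set $x = m^{3/m}$, so that $x^m = m^3$, and rewrite
\begin{align*}
T_m(x) = (x-1)x^{m+1} - 2\left(x^{\lfloor m/2\rfloor + 1} + x^{\lceil m/2 \rceil + 1}\right) - (x-1).
\end{align*}
With $x^{m} = m^3$ we get $(x-1)x^{m+1} = (x-1)x \cdot m^3$, which is the dominant term; the middle term is controlled by noting $x^{\lfloor m/2\rfloor+1} + x^{\lceil m/2\rceil +1} \le 2x \cdot x^{\lceil m/2\rceil} \le 2x\cdot x^{(m+1)/2} = 2x\sqrt{x}\,\sqrt{x^m} = 2x^{3/2} m^{3/2}$. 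Since $x = m^{3/m} \to 1$ and in fact $1 < x < 2$ for $m \ge 5$ (because $m^{3/m} \le 5^{3/5} < 2$, using that $m^{3/m}$ is decreasing for $m \ge 3$), the factor $(x-1)$ is small but positive, and I need the cubic-in-$m$ main term to dominate the $m^{3/2}$ correction. The key quantitative estimate is a lower bound on $x - 1 = m^{3/m} - 1$: writing $m^{3/m} - 1 = e^{(3\ln m)/m} - 1 \ge (3\ln m)/m$, so $(x-1)x m^3 \ge 3 m^2 \ln m$ (using $x > 1$), while $2(x^{\lfloor m/2\rfloor+1}+x^{\lceil m/2\rceil+1}) \le 4x^{3/2}m^{3/2} \le 4\cdot 2^{3/2} m^{3/2}$ and $(x-1) \le 1$. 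Hence $T_m(m^{3/m}) \ge 3m^2\ln m - 4\cdot 2^{3/2} m^{3/2} - 1 > 0$ for all $m \ge 5$ (this last inequality is a routine check, since $3m^2 \ln m$ grows far faster than $m^{3/2}$; one verifies the base case $m=5$ directly).

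It remains to argue that having $T_m(m^{3/m}) > 0$ forces the largest real root to be below $m^{3/m}$. For this I would note that $T_m(x) \to +\infty$ as $x \to +\infty$, and show that $T_m$ has no root in the interval $[m^{3/m}, \infty)$ by checking $T_m > 0$ there. The cleanest way is to observe that for $x \ge m^{3/m} > 1$, the leading behavior is monotone: $T_m(x) = (x-1)x^{m+1} - 2(x^{\lfloor m/2\rfloor+1} + x^{\lceil m/2\rceil+1}) - (x-1)$, and on $[m^{3/m},\infty)$ the same estimate as above (with $x^m \ge m^3$ now, since $x \ge m^{3/m}$) gives $(x-1)x^{m+1} \ge 2(x-1)x^{\lceil m/2\rceil + 1}\cdot \frac{x^{\lfloor m/2\rfloor}}{\text{(bounded)}}$; more directly, for $x \ge m^{3/m}$ one has $x^{m/2} \ge m^{3/2}$, and combined with $(x-1) \ge 3\ln m / m$ at the left endpoint and $(x-1)$ increasing, the positive term beats the rest throughout. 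Alternatively — and this is the slicker route — since $\phi$ is pseudo-Anosov its dilatation $\lambda(\phi)$ is the unique root of $T_m$ strictly greater than $1$ (it is the Perron--Frobenius eigenvalue of the transition matrix), so it suffices to exhibit \emph{one} value $x_0 > 1$ with $x_0 < m^{3/m}$ and ... no: rather, since $T_m(x) < 0$ just above $x=1$ (the constant-ish terms dominate) and $T_m(m^{3/m}) > 0$, the root $\lambda(\phi) \in (1, m^{3/m})$ by the intermediate value theorem, and uniqueness of the root $> 1$ finishes it.

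The main obstacle I anticipate is the bookkeeping in the middle term: the exponents $\lfloor m/2\rfloor + 1$ and $\lceil m/2\rceil + 1$ differ by the parity of $m$, and I want a clean bound like $x^{\lfloor m/2\rfloor+1} + x^{\lceil m/2\rceil+1} \le 2x^{(m+2)/2 + 1/2}$ that works uniformly; getting the constant right so that the inequality $3m^2 \ln m > 4\cdot 2^{3/2}m^{3/2} + 1$ holds already at $m = 5$ (rather than only for large $m$) requires either a small numerical check at $m = 5, 6$ or a slightly more careful estimate of $m^{3/m} - 1$ from below. A secondary subtlety is making sure one is genuinely bounding the \emph{largest real root} and not merely $\lambda(\phi)$; invoking that $T_m$ factors (or that the relevant Perron root is the unique root exceeding $1$, with all other real roots in $[-1,1]$, say by a sign-change / Descartes-type argument on $T_m$) removes any ambiguity, and I would include a one-line remark to that effect. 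Apart from these, the argument is a direct substitution-and-estimate, so I would keep it to a short paragraph in the final writeup.
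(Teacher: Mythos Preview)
Your overall strategy matches the paper's: substitute $x = m^{3/m}$, use $e^t - 1 > t$ to bound $x-1$ from below by $\tfrac{3\log m}{m}$, and control the middle terms via $x^{m/2}\approx m^{3/2}$. The paper does the same thing, only it first divides through by $x^{m+1}$ and then verifies the resulting inequality for \emph{all} $x\ge m^{3/m}$, which immediately rules out any root at or above $m^{3/m}$ and makes your second paragraph (and the appeal to ``uniqueness of the root $>1$'') unnecessary.

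There is, however, a genuine numerical slip that breaks your chain of inequalities for the small values of $m$ you need. You assert $m^{3/m}\le 5^{3/5}<2$, but $5^{3/5}=e^{(3/5)\ln 5}\approx 2.63$; in fact $m^{3/m}<2$ only once $m\ge 10$ or so. With the correct value of $x$ your bound $4\cdot 2^{3/2}m^{3/2}$ on the middle term is no longer valid for $m=5,\dots,9$, and even if it were, the inequality $3m^2\ln m>4\cdot 2^{3/2}m^{3/2}+1$ already fails at $m=5$ (left side $\approx 120.7$, right side $\approx 127.5$). So the estimate as written does not cover the stated range. The fix is exactly what the paper does: after dividing by $x^{m+1}$ you only need $(x-1)>2\bigl(x^{\lfloor m/2\rfloor-m}+x^{\lceil m/2\rceil-m}\bigr)+x^{-m}$, and since the right-hand exponents are negative one gets the clean bounds $x^{\lceil m/2\rceil-m}\le 1/m$ and $x^{-m}\le m^{-3}\le 1/(25m)$ for $m\ge 5$, while $x-1>\tfrac{3\log m}{m}\ge \tfrac{9}{2m}>\tfrac{4}{m}+\tfrac{1}{25m}$. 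This works uniformly from $m=5$ on with no separate base-case check and no need to bound $x$ from above.
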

\begin{proof}
For all $m$, we have $T_{m}(1)=-4$.  It is sufficient to show that for all $x\geq m^{\frac{3}{m}}$, we have $T_{m}(x)>0$.  Dividing the inequality by $x^{(m+1)}$, it is equivalent to show
\begin{align*}
(x-1)+x^{-(m+1)}>2\left( x^{\lfloor \frac{m}{2}\rfloor-m}+x^{\lceil\frac{m}{2}\rceil-m}\right)+x^{-m}.
\end{align*}
For $m\geq5$, one can verify the following inequalities hold for all $x\geq m^{\frac{3}{m}}$
\begin{enumerate}
\item $x-1 >\frac{3 \log m}{m}\geq \frac{9}{2m}$,
\item $x^{\lfloor \frac{m}{2}\rfloor-m}\leq x^{\lceil\frac{m}{2}\rceil-m}\leq \frac{1}{m}$,
\item $x^{-m}\leq \frac{1}{25m}$.
\end{enumerate}
Therefore,
\begin{align*}
(x-1)+x^{-(m+1)}&>x-1>\frac{9}{2m}>\frac{101}{25 m}=2(\frac{1}{m}+\frac{1}{m})+\frac{1}{25m}\\
&\geq 2\left( x^{\lfloor \frac{m}{2}\rfloor-m}+x^{\lceil\frac{m}{2}\rceil-m}\right)+x^{-m}.
\end{align*}
\end{proof}

\begin{remark}
Proposition \ref{lroot} fails if we try to replace the bound with $c^{\frac{1}{m}}$ where $c$ is any constant.
\end{remark}
\begin{remark}
$\phi_{s,t}$ is not the example which gives the best upper bound on $l_{0,m}$ in \cite{HK}.  In fact they show $\log \lambda(\phi_{s,t})$ is strictly greater than $l_{0,m}$ for $m\geq 8$, for all $s$ and $t$.
\end{remark}

Next, we take a cyclic branched cover $S_{2,n}$ of $S_{0,m+2}$ with branched points $x$, $y$, $z$ where $n=5(m+1)+1$.  See Figure \ref{cover}.  Define $\widetilde{X}=\{ \text{marked points around } \widetilde{x} \}$ and $\widetilde{Y}=\{ \text{marked points around } \widetilde{y} \}$, so we have $|\widetilde{X} \cap \widetilde{Y}|=5$, $|\widetilde{X}|=5(s+1)$ and $|\widetilde{Y}|=5(t+1)$.
\begin{figure}[htbp]
\begin{center}
\psfrag{y}{$\widetilde{y}$}
\psfrag{x}{$\widetilde{x}$}
\psfrag{z}{$\widetilde{z}$}
\psfrag{y1}{$y$}
\psfrag{x1}{$x$}
\psfrag{z1}{$z$}
\psfrag{pi}{$\pi$}
\includegraphics[width=0.9\textwidth]{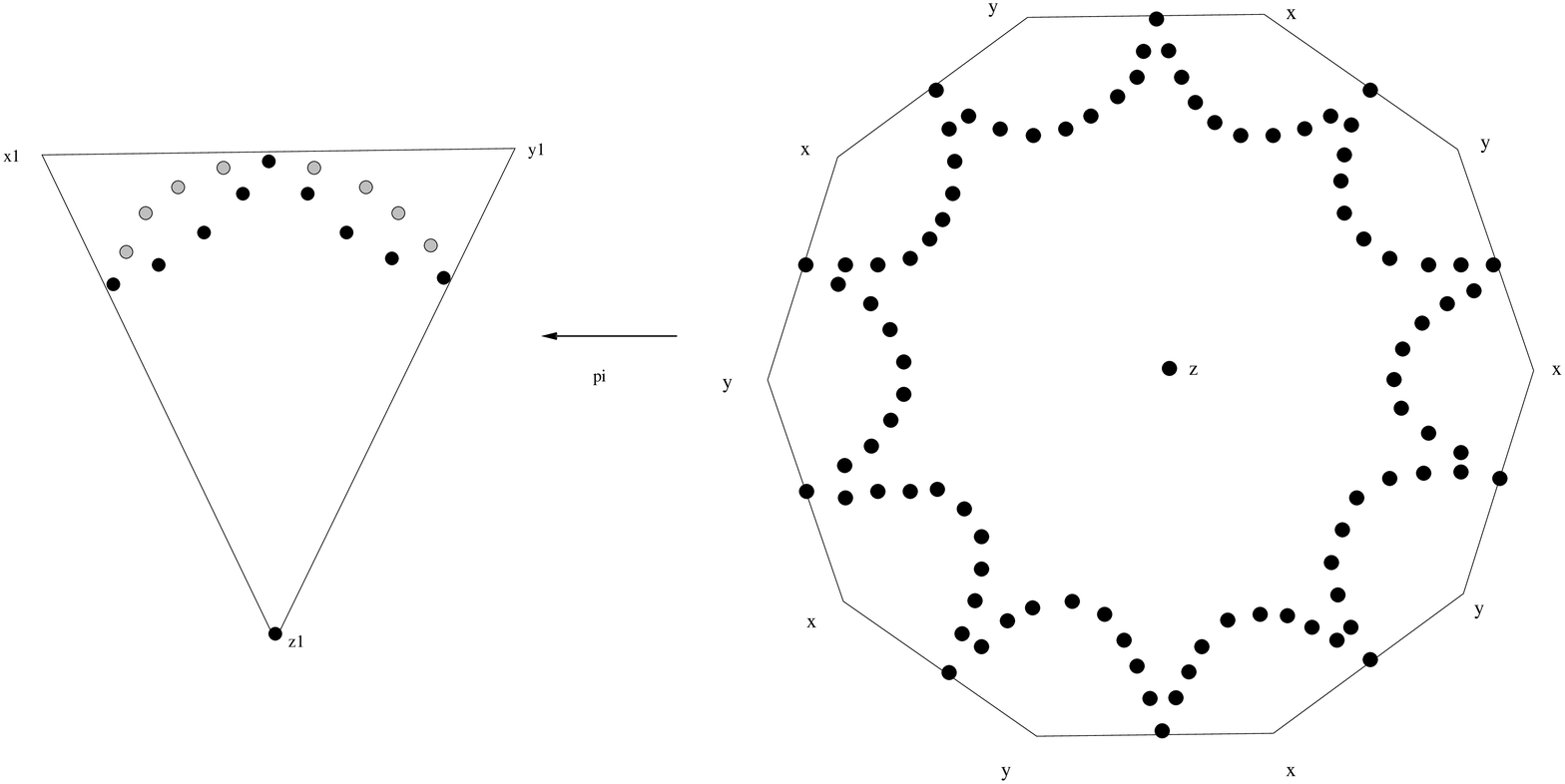}
\caption{$\pi$ is the covering map.  To form $S_{2,n}$ from the decagon, identify the opposite sides.  Then $\pi$ is the quotient by the group generated by rotation of an angle $2\pi/5$.}\label{cover}
\end{center}
\end{figure}

We lift $\alpha_{s}$, $\beta_{t}$ to $S_{2,n}$ and call them $\widetilde{\alpha_{s}}$, $\widetilde{\beta_{t}}$, so that $\widetilde{\alpha_{s}}$ rotates the marked points of $\widetilde{X}$ counterclockwise around $\widetilde{x}$ and $\widetilde{\beta_{t}}$ rotates the marked points of $\widetilde{Y}$ clockwise around $\widetilde{y}$; see Figure \ref{psi}.  We define $\psi_{s,t}:=\widetilde{\beta_{t}}\widetilde{\alpha_{s}}$.  It follows that $\psi_{s,t}$ is a lift of $\phi_{s,t}$, and so is pseudo-Anosov with $\lambda(\psi_{s,t})=\lambda(\phi_{s,t})$.  An invariant train track for $\psi_{s,t}$ is obtained by lifting the one constructed in \cite{HK}, and is shown in Figure \ref{oldtt} for $s=t=3$.
\begin{figure}[htbp]
\begin{center}
\psfrag{y}{$\widetilde{y}$}
\psfrag{x}{$\widetilde{x}$}
\psfrag{z}{$\widetilde{z}$}
\psfrag{f1}{$\widetilde{\alpha_{s}}$}
\psfrag{f2}{$\widetilde{\beta_{t}}$}
\includegraphics[width=1\textwidth]{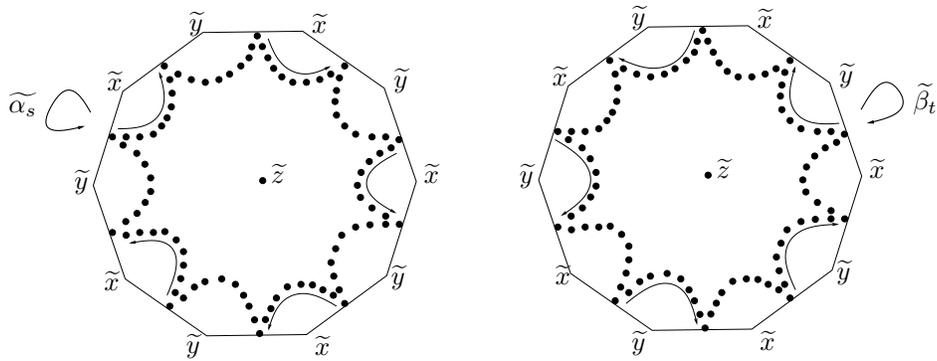}
\caption{Homeomorphisms $\widetilde{\alpha_{s}}$ and $\widetilde{\beta_{t}}$.}\label{psi}
\end{center}
\end{figure}
\begin{figure}[htbp]
\begin{center}
\psfrag{y}{$\widetilde{y}$}
\psfrag{x}{$\widetilde{x}$}
\psfrag{z}{$\widetilde{z}$}
\includegraphics[width=0.7\textwidth]{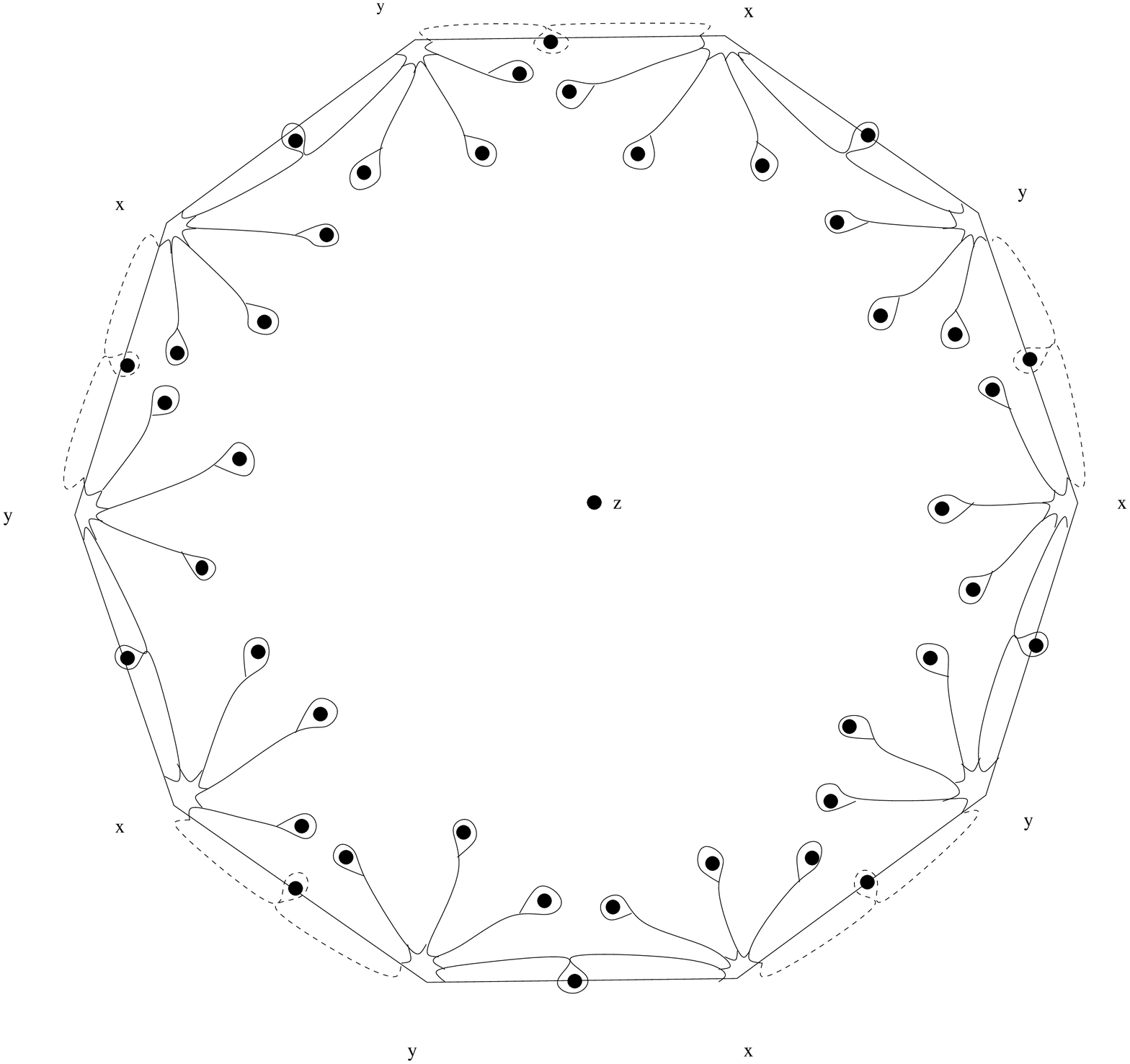}
\caption{A train track for $\psi_{3,3}$}\label{oldtt}
\end{center}
\end{figure}

Hence for $n=5(m+1)+1\geq 31$, we have constructed a pseudo-Anosov $\psi=\psi_{\lfloor \frac{m}{2}\rfloor \text{,} \lceil \frac{m}{2}\rceil} \in \Mod(S_{2,n})$ with $\lambda(\psi) = \lambda(\phi) \leq m^{\frac{3}{m}}$ which implies
\begin{align*}
\log\lambda(\psi) \leq \frac{3\log m}{m}=\frac{15\log(n-6)-15\log 5}{n-6}.
\end{align*}

We will now extend $\psi$ so that $n$ can be an arbitrary number $\geq 31$.  We add an extra marked point $p_{1}$ on $S_{2,n}$ between points in $\widetilde{X}$ or $\widetilde{Y}$ {\it except the places shown in Figure \ref{except}}.
\begin{figure}
\begin{center}
\psfrag{y}{$\widetilde{y}$}
\psfrag{x}{$\widetilde{x}$}
\psfrag{z}{$\widetilde{z}$}
\includegraphics[width=0.9\textwidth]{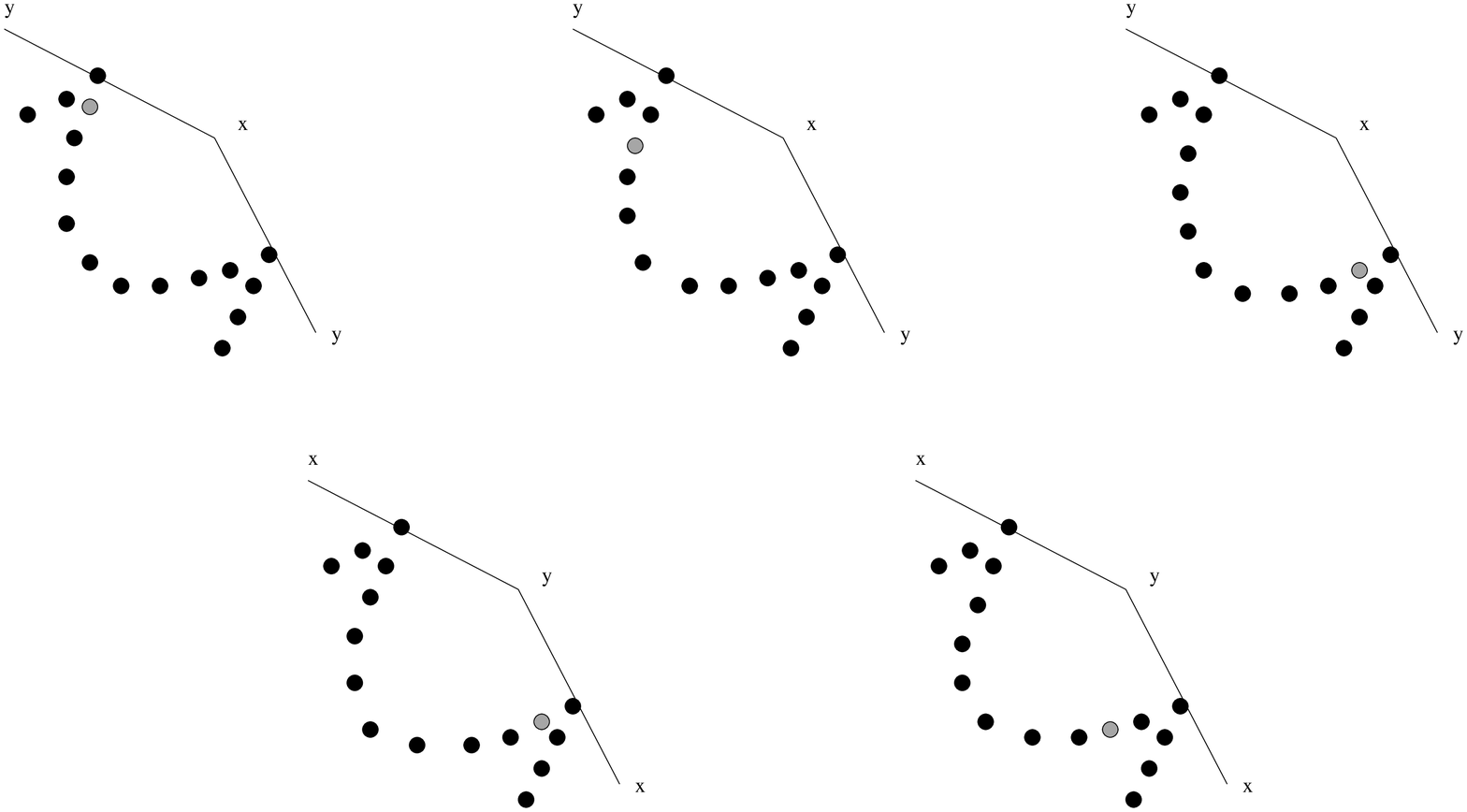}
\caption{We are \textit{not} allowed to add $p_{1}$ in the places indicated by a shaded point.}\label{except}
\end{center}
\end{figure}

Without loss of generality we assume $p_{1}$ is added in $\widetilde{X}$ to obtain $S_{2,n+1}$ and we define $\psi_{1}:=\widetilde{\beta_{t}}\widetilde{\alpha_{s}}'\in\Mod(S_{2,n+1})$ where $\widetilde{\alpha_{s}}'$ is extended from $\widetilde{\alpha_{s}}$ in the obvious way; see Figure \ref{newpsi}.
\begin{figure}[htbp]
\begin{center}
\psfrag{y}{$\widetilde{y}$}
\psfrag{x}{$\widetilde{x}$}
\psfrag{z}{$\widetilde{z}$}
\psfrag{p}{$p_{1}$}
\psfrag{f1}{$\widetilde{\alpha_{s}}'$}
\includegraphics[width=0.9\textwidth]{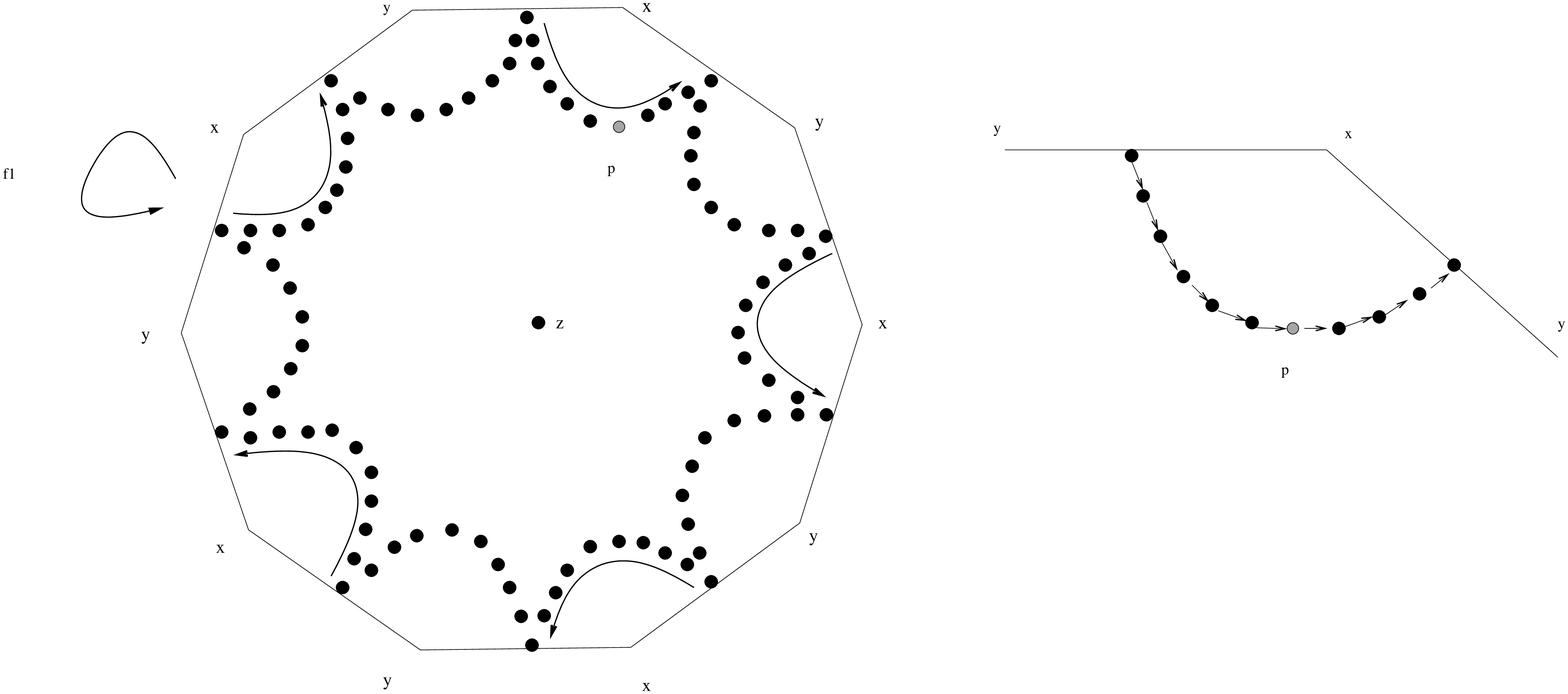}
\caption{The homeomorphism $\widetilde{\alpha_{s}}'$.  The figure on the right is a local picture near the added point $p_{1}$.}\label{newpsi}
\end{center}
\end{figure}
One can check that $\psi_{1}$ is pseudo-Anosov via the techniques of \cite{BH}.  An invariant train track for $\psi_{1}$ is shown in Figure \ref{newtt} and is obtained by modifying the invariant train track for $\psi$ shown in Figure \ref{oldtt}.

\begin{figure}[htbp]
\begin{center}
\psfrag{y}{$\widetilde{y}$}
\psfrag{x}{$\widetilde{x}$}
\psfrag{z}{$\widetilde{z}$}
\includegraphics[width=0.7\textwidth]{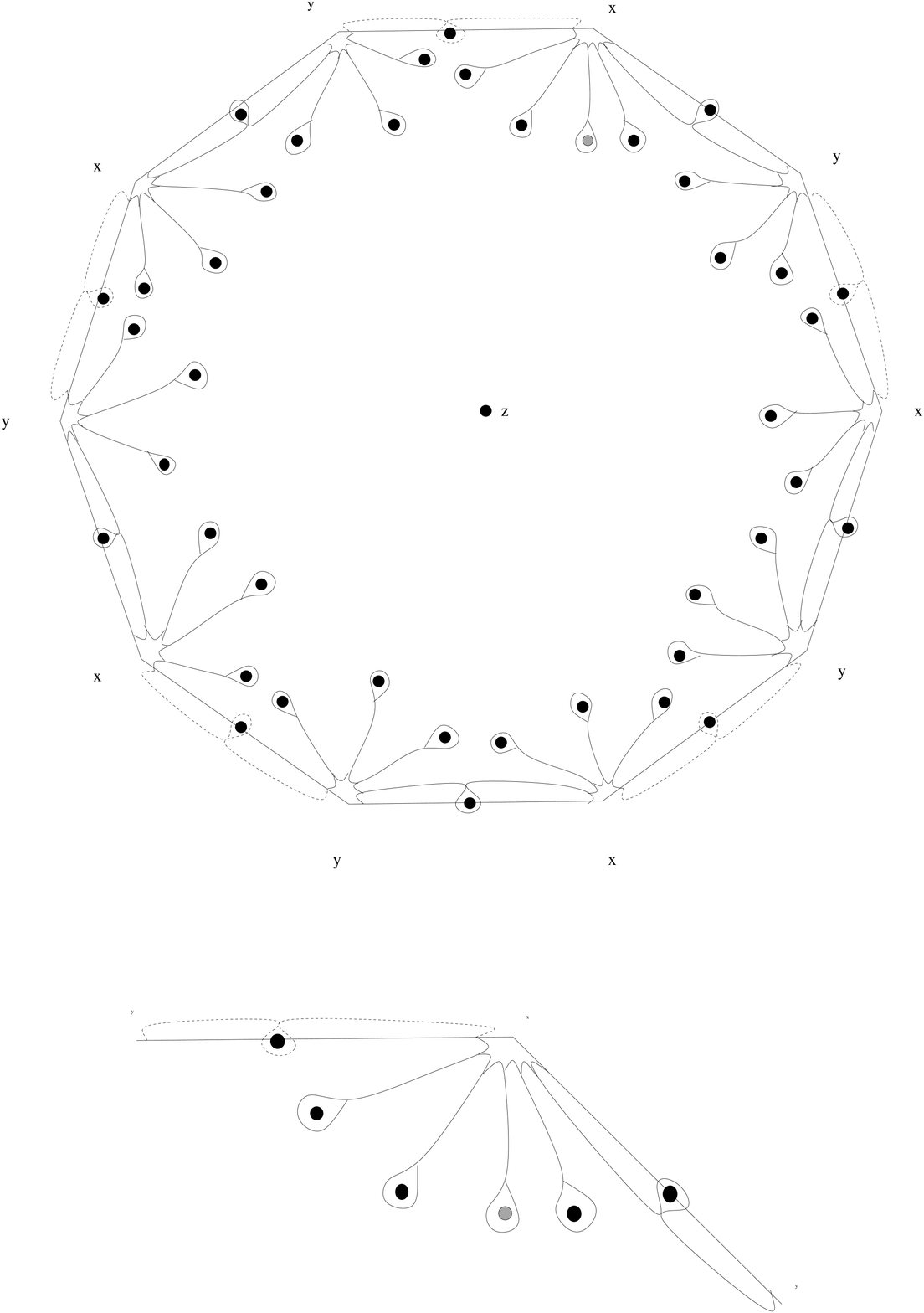}
\caption{A train track for $\psi_{1}$.  The figure on the bottom is a local picture.}\label{newtt}
\end{center}
\end{figure}
Next, we will show $\lambda(\psi_{1})\leq \lambda(\psi)$.  Let $H$ (respectively $H_{1}$) be the associated transition matrix of the train track map for $\psi$ (respectively $\psi_{1}$), and let $\Gamma$ (respectively $\Gamma_{1}$) be the induced directed graph as constructed in Section \ref{secMarkov}.

From the construction above (i.e. adding $p_{1}$), the directed graph $\Gamma_{1}$ is obtained by adding a vertex on the edge going out from some vertex $i$ in $\Gamma$ (that is, subdividing the edge going out from $i$) where $i$ has exactly one edge coming in and exactly one edge going out.  This implies $P_{\Gamma_{1}}(i,k+1)=P_{\Gamma}(i,k)$ and
\begin{align*}
\sqrt[k+1]{P_{\Gamma_{1}}(i,k+1)}\leq \sqrt[k]{P_{\Gamma_{1}}(i,k+1)}=\sqrt[k]{P_{\Gamma}(i,k)}
\end{align*}
for all $k$.  Since $H$ and $H_{1}$ are Perron-Frobenius matrices with Perron-Frobenius eigenvalues corresponding to the dilatations of $\psi$ and $\psi_{1}$, and Proposition \ref{dgraph} tells us $\mu(H_{1}) \leq \mu(H)$, we have $\lambda(\psi_{1})=\mu(H_{1})$ is no greater than $\lambda(\psi)=\mu(H)$.  

We can obtain $\psi_{2}$, $\psi_{3}$ and $\psi_{4}$ by repeating the construction above of adding more marked points without increasing dilatations (i.e. $\lambda(\psi_{c}) \leq \lambda(\psi)$ for $c=1,2,3,4$).  Since $\frac{\log m}{m}\geq \frac{\log (m+1)}{m+1}$, we need not consider the cases with $c\geq5$.  Therefore, set $f:S_{2,n}\rightarrow S_{2,n}$ to be $\psi_{c}$, where $n=5(m+1)+1+c$ with $c<5$, and where $\psi_{0}=\psi$.  For $n\geq 31$, we have
\begin{align*}
\log \lambda(f) &\leq \log \lambda(\psi) < \frac{3\log m}{m} < \frac{3\log\left( \frac{n-11}{5}\right)}{\left( \frac{n-11}{5}\right)},
\end{align*}
where $m=\left\lfloor \frac{n-6}{5} \right\rfloor$.
\begin{theorem}\label{genus2}
There exists $\kappa_{2}>0$ such that $l_{2,n}<\frac{\kappa_{2}\log n}{n}$, for all $n\geq3$.
\end{theorem}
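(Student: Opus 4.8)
The plan is to combine the explicit family constructed in this subsection, which already handles every large $n$, with a short finiteness argument that absorbs the remaining small values of $n$ into the constant.

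\textbf{Large $n$.} For $n\ge 31$ the preceding construction has already produced a pseudo-Anosov $f=\psi_{c}\in\Mod(S_{2,n})$ with
\[
l_{2,n}\;\le\;\log\lambda(f)\;<\;\frac{3\log\!\big(\tfrac{n-11}{5}\big)}{\tfrac{n-11}{5}}
\;=\;\frac{15\big(\log(n-11)-\log 5\big)}{n-11}.
\]
From here I would only need crude estimates: $\log(n-11)-\log 5<\log(n-11)<\log n$, and $n-11\ge n/2$ as soon as $n\ge 22$, so for every $n\ge 31$
\[
l_{2,n}\;<\;\frac{15\log n}{\,n-11\,}\;\le\;\frac{30\log n}{n}.
\]
Thus $\kappa_{2}=30$ already works for all $n\ge 31$.

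\textbf{Small $n$.} For each fixed $n$ with $3\le n\le 30$ the surface $S_{2,n}$ admits pseudo-Anosov mapping classes (for instance by Thurston's construction, since $\chi(S_{2,n})<0$ and $S_{2,n}$ is not one of the finitely many exceptional low-complexity surfaces), so $l_{2,n}$ is a well-defined positive real number. Since there are only finitely many such $n$ and $\frac{\log n}{n}>0$ for $n\ge 2$, I would set
\[
\kappa'\;=\;1+\max_{3\le n\le 30}\ \frac{n\,l_{2,n}}{\log n},
\]
which gives $l_{2,n}<\frac{\kappa'\log n}{n}$ for every $n$ in that range.

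\textbf{Conclusion and main obstacle.} Taking $\kappa_{2}=\max\{30,\kappa'\}$ then yields $l_{2,n}<\frac{\kappa_{2}\log n}{n}$ for all $n\ge 3$, which is the claim. The only step needing any attention is making sure $l_{2,n}$ is finite for the finitely many small $n$, i.e.\ that $\Mod(S_{2,n})$ contains pseudo-Anosov elements there; this is classical, so I do not expect a genuine difficulty. The large-$n$ part is just the bookkeeping of replacing $m=\lfloor (n-6)/5\rfloor$ by a linear function of $n$ and absorbing absolute constants, exactly as above.
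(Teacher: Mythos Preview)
Your proposal is correct and follows essentially the same approach as the paper: for $n\ge 31$ you use the constructed family $\psi_{c}$ together with elementary inequalities to get a bound of the form $C\,\frac{\log n}{n}$, and for $3\le n\le 30$ you absorb the finitely many values of $l_{2,n}$ into the constant. The only cosmetic difference is that you carry out the large-$n$ estimate explicitly (arriving at $\kappa_{2}=30$), whereas the paper simply asserts the existence of some $\kappa_{2}'$; the small-$n$ argument is likewise the same finiteness trick, phrased slightly differently.
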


\begin{proof}
From the discussion above, for $n\geq31$,
\begin{align*} 
l_{2,n}<\frac{3\log\left( \frac{n-11}{5}\right)}{\left( \frac{n-11}{5}\right)}
       <\frac{\kappa_{2}'\log n}{n},
\end{align*}
for some $\kappa_{2}'$.  For $3\leq n \leq 30$, let $\kappa_{2}''= \max\{l_{2,3}, l_{2,4}, \cdots, l_{2,30}\}$ then
\begin{align*}
l_{2,n} \leq \kappa_{2}'' = \left(\kappa_{2}''\frac{31}{\log31}\right) \frac{\log31}{31} < \left( \kappa_{2}''\frac{31}{\log31}\right)\frac{\log n}{n}.
\end{align*}
Let $\kappa_{2}:=\max\{\kappa_{2}',\kappa_{2}''\frac{31}{\log31}\}$.
\end{proof}

\subsection{Higher genus cases}

We can generalize our construction and extend to any genus $g>2$.  For any fixed $g>2$, we define $\psi$ to be a homeomorphism of $S_{g,n}$ in the same fashion with $n=(2g+1)(m+1)+1$ by taking an appropriate branched cover over $S_{0,m+2}$, and we can again extend to arbitrary $n$ by adding $c$ extra marked points and constructing $\psi_{c}$.  Define $f:S_{g,n}\rightarrow S_{g,n}$ to be $\psi_{c}$ where $n=(2g+1)(m+1)+1+c$.  If $n\geq 6(2g+1)+1$, then
\begin{align*}
\log \lambda(f) &< \frac{3\log m}{m}\text{, where } m=\left\lfloor \frac{n-1}{2g+1}\right\rfloor-1\\
                &<\frac{3\log\left( \frac{n-4g-3}{2g+1}\right)}{\left( \frac{n-4g-3}{2g+1}\right)}.
\end{align*}

\begin{theorem}\label{upbd}
For any fixed $g\geq 2$, there exists $\kappa_{g}>0$ such that $l_{g,n} < \frac{\kappa_{g}\log n}{n}$, for all $n\geq3$.
\end{theorem}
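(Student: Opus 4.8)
The plan is to mirror the proof of Theorem \ref{genus2}: split the claim into a ``large $n$'' range, where the explicit construction $f=\psi_c\in\Mod(S_{g,n})$ described just above applies, and a finite ``small $n$'' range that is absorbed into the constant.

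First I would treat $n\geq 6(2g+1)+1=12g+7$. For such $n$ the discussion preceding the theorem produces a pseudo-Anosov $f\in\Mod(S_{g,n})$ with
\[
\log\lambda(f)<\frac{3\log\!\left(\frac{n-4g-3}{2g+1}\right)}{\left(\frac{n-4g-3}{2g+1}\right)}=\frac{3(2g+1)\log\!\left(\frac{n-4g-3}{2g+1}\right)}{n-4g-3}.
\]
Since $n\geq 12g+7$ forces $n\geq 8g+6$, hence $n-4g-3\geq n/2$, and since $\frac{n-4g-3}{2g+1}\leq n$ (so the logarithm in the numerator is at most $\log n$ and, as the argument is $\geq 4$, is positive), the right-hand side is at most $\frac{6(2g+1)\log n}{n}$. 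Thus $l_{g,n}\leq\log\lambda(f)<\frac{6(2g+1)\log n}{n}$ for all $n\geq 12g+7$; put $\kappa_g':=6(2g+1)$.

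Next I would handle $3\leq n\leq 12g+6$. This is a finite list of surfaces, each admitting pseudo-Anosov mapping classes since $g\geq2$, so each $l_{g,n}$ is a well-defined positive real number. Set $\kappa_g'':=\max\{l_{g,3},l_{g,4},\dots,l_{g,12g+6}\}$. Because $\frac{\log x}{x}$ is monotone decreasing for $x\geq3$, for every $n$ in this range we have $\frac{\log(12g+7)}{12g+7}<\frac{\log n}{n}$, so
\[
l_{g,n}\leq\kappa_g''=\left(\kappa_g''\,\frac{12g+7}{\log(12g+7)}\right)\frac{\log(12g+7)}{12g+7}<\left(\kappa_g''\,\frac{12g+7}{\log(12g+7)}\right)\frac{\log n}{n}.
\]
Finally, setting $\kappa_g:=\max\left\{\kappa_g',\ \kappa_g''\,\frac{12g+7}{\log(12g+7)}\right\}$ yields $l_{g,n}<\frac{\kappa_g\log n}{n}$ for all $n\geq3$.

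The only genuinely non-routine content is hidden in the construction feeding the large-$n$ case, not in this bookkeeping: one must verify that the lift $\psi$ of the Hironaka--Kin map $\phi$ to the cyclic branched cover $S_{g,n}\to S_{0,m+2}$ is again pseudo-Anosov (via the Bestvina--Handel criterion applied to the lifted train track), that this branched cover has genus exactly $g$ with $n=(2g+1)(m+1)+1$ marked points, and that the marked-point-insertion moves producing $\psi_c$ do not increase the dilatation --- the last point following from Proposition \ref{dgraph} exactly as in the genus $2$ case, since each move subdivides an edge of the directed graph at a vertex with one incoming and one outgoing edge. Granting these facts, the theorem reduces, as above, to elementary estimates on $\frac{\log n}{n}$ together with finiteness of the exceptional range.
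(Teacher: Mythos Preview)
Your proof is correct and follows essentially the same approach as the paper's: split at $n=12g+7$, use the constructed $f=\psi_c$ together with the bound $\log\lambda(f)<\frac{3\log((n-4g-3)/(2g+1))}{(n-4g-3)/(2g+1)}$ for large $n$, absorb the finitely many small $n$ into $\kappa_g''$, and take $\kappa_g=\max\{\kappa_g',\,\kappa_g''\frac{12g+7}{\log(12g+7)}\}$. Your version is in fact more explicit than the paper's, which simply says the proof is ``similar to the proof of Theorem \ref{genus2}'' and records the same formula for $\kappa_g$.
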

\begin{proof}
This is similar to the proof of Theorem \ref{genus2}, where $\kappa_{g}:=\max\{\kappa_{g}',\kappa_{g}''\frac{12g+7}{\log(12g+7)}\}$.
\end{proof}

\begin{proof}[Proof of Theorem \ref{main}]
We only need to prove that the lower bounds on $\log \lambda(f)$ of Theorem \ref{mainthm} are bounded below by $\frac{\log n}{\omega_{g}n}$ for some $\omega_{g}$ depending only on $g$, then let $c_{g}=\max\{\kappa_{g},\omega_{g}\}$.  We use the monotone decreasing property of $\frac{\log n}{n}$ for $n\geq3$.  Let
\begin{align*}
\omega'_{g}(\alpha) := \frac{\alpha(12g-12)}{\log2}\frac{\log 3}{3} \geq \frac{\alpha(12g-12)}{\log2}\frac{\log n}{n}
\end{align*}
and so
\begin{align*}
\frac{\log2}{\alpha(12g-12)} \geq \frac{\log n}{\omega'_{g}(\alpha)n}.
\end{align*}
For $n\geq g-1$,
\begin{align*}
\frac{\log(18g+6n-18)}{2\alpha(18g+6n-18)} \geq\frac{\log24n}{2\alpha24n}>\frac{1}{48\alpha}\frac{\log n}{n}.
\end{align*}
For $3\leq n< g-1$,
\begin{align*}
\frac{\log(18g+6n-18)}{2\alpha(18g+6n-18)} > \frac{\log(24(g-1))}{2\alpha 24(g-1)}\geq\frac{\log(24(g-1))}{2\alpha 24(g-1)}\frac{3}{\log 3}\frac{\log n}{n}.
\end{align*}
Let $\omega_{g}:=\max \{ \omega'_{g}(\alpha),48\alpha,\frac{48\alpha(g-1)\log 3}{3\log(24(g-1))} \}$, where $0\leq \alpha \leq \Theta(g)$.
\end{proof}

\section{Appendix}
We will construct a example to prove that $l_{1,2n}$ has an upper bound of the same order as Penner's lower bound in \cite{Pe}, i.e. $l_{1,2n}= O(\frac{1}{n})$.  The construction is analogous to the one given by Penner for $S_{g,0}$ in \cite{Pe}.

Let $S_{1,2n}$ be a marked torus of $2n$ marked points.  Let $a$ and $b$ be essential simple closed curves as in Figure \ref{scc}.  
\begin{figure}[htbp]
\begin{center}
\psfrag{a}{$a$}
\psfrag{b}{$b$}
\includegraphics[width=0.3\textwidth]{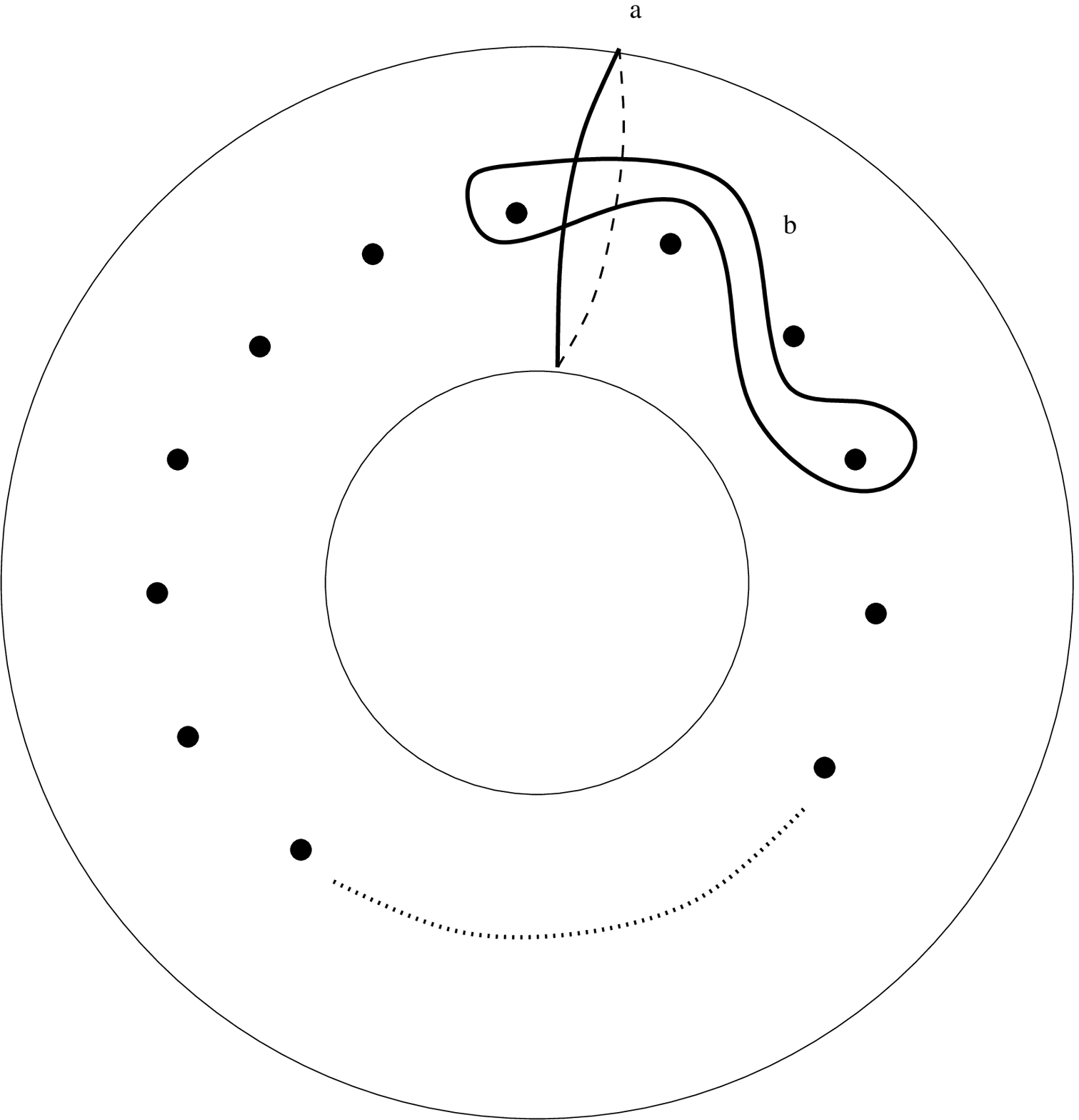}
\caption{Essential simple closed curves $a$ and $b$ on a marked torus.}\label{scc}
\end{center}
\end{figure}
Let $T_{a}^{-1}$ be the left Dehn twist along $a$ and $T_{b}$ be the right Dehn twist along $b$, then we define 
\begin{align*}
f:=\rho\circ T_{b}\circ T_{a}^{-1}\in \Mod(S_{1,2n})
\end{align*}
where $\rho$ rotates the torus clockwise by an angle of $2\pi/n$, so it sends each marked point to the one which is two to the right.  As in \cite{Pe-const}, $f^{n}$ is shown to be pseudo-Anosov, and thus so is $f$.  Figure \ref{fton} shows a bigon track for $f^{n}$.
\begin{figure}[htbp]
\begin{center}
\psfrag{a}{$a$}
\psfrag{b}{$b$}
\includegraphics[width=0.4\textwidth]{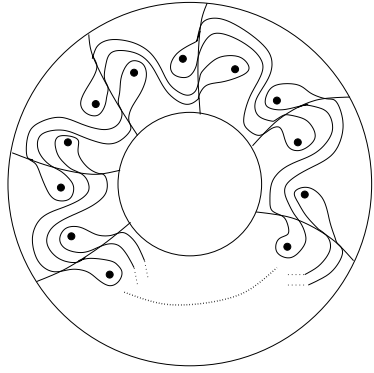}
\caption{A bigon track for $f^{n}$.}\label{fton}
\end{center}
\end{figure}

We obtain the transition matrix $M^{n}$ associated to the train track map of $f^{n}$ where $M^{n}$ is an integral Perron-Frobenius matrix and the Perron-Frobenius eigenvalues $\mu(M^{n})$ is the dilatation $\lambda(f^{n})$ of $f^{n}$.  For $n\geq 5$,
\begin{align*}
M^{n}=
\left( \begin{array}{cccccccccccccc}
 1&      1&       0&      1& 0& 0& 0& 0& \cdots& \cdots&     0& 0& 0& 0\\
 1&      2&       0&      1& 0& 0& 0& 0& \cdots&  \cdots&    0& 0& 1& 0\\
 0&      0&       1&      1& 0& 1& 0& 0& \cdots&  \cdots&    0& 0& 0& 0\\
 1&      1&       1&      3& 0& 1& 0& 0& \cdots& \cdots&     0& 0& 0& 0\\
 0&      0&       0&      0& 1& 1& 0& 1& \cdots& \cdots&     0& 0& 0& 0\\
 0&      0&  1&      1& 1& 3& 0& 1& \cdots&  \cdots&    0& 0& 0& 0\\
  0&      0&  0&      0& 0& 0& 1& 1& \cdots&  \cdots&    0& 0& 0& 0\\
  0&      0&  0& 0& 1& 1& 1& 3& \cdots&  \cdots&    0& 0& 0& 0\\
  0&      0&  0& 0& 0& 0& 0& 0& \cdots&  \cdots&    0& 0& 0& 0\\
 0&      0&  0& 0& 0& 0& 1& 1& \cdots&  \cdots&    0& 0& 0& 0\\
 0&      0&  0& 0& 0& 0& 0& 0& \cdots&  \cdots&    0& 0& 0& 0\\
 \vdots&  \vdots& \vdots& \vdots& \vdots& \vdots& \vdots& \vdots& \cdots&  \cdots&    \vdots& \vdots& \vdots& \vdots\\
 0&      0&  0& 0& 0& 0& 0& 0& \cdots&  \cdots&    0& 0& 0& 0\\
 0&      0&  0& 0& 0& 0& 0& 0& \cdots& \cdots&     0& 1& 0& 0\\
 0&      0&  0& 0& 0& 0& 0& 0& \cdots& \cdots&     0& 1& 0& 0\\
0&      0&  0& 0& 0& 0& 0& 0& \cdots&  \cdots&    1& 1& 0& 1\\
 0&      0&  0& 0& 0& 0& 0& 0& \cdots&  \cdots&   1& 3& 0& 1\\
 1&      2&       0&      1& 0& 0& 0& 0& \cdots& \cdots&  0& 0& 2& 1\\
 1&      2&       0&      1& 0& 0& 0& 0& \cdots& \cdots&  1& 1& 2& 3\\
  \end{array} \right)_{2n\times 2n}.
\end{align*}

Note that pairs of columns in the middle of the matrix shift down by $2$ in succession.  For $n\geq 5$, the greatest column sum of $M^{n}$ is $9$ and the greatest row sum of $M^{n}$ is $11$.  One can verify that both the greatest column sum and the greatest row sum are $\leq11$ for $0<n\leq 4$.  Therefore, for $n\geq1$
\begin{align*}
11\geq \mu(M^{n}) = \lambda(f^{n}) =(\lambda(f))^{n}\\
\Rightarrow  l_{1,2n}\leq \log \lambda(f) \leq \frac{\log 11}{n}.
\end{align*}

\bibliographystyle{amsalpha}

\bibliography{lpa}

\begin{thebibliography}{McM00}

\bibitem[Bau92]{Ba}
Max Bauer.
\newblock An upper bound for the least dilatation.
\newblock {\em Trans. Amer. Math. Soc.}, 330(1):361--370, 1992.

\bibitem[BH95]{BH}
M.~Bestvina and M.~Handel.
\newblock Train-tracks for surface homeomorphisms.
\newblock {\em Topology}, 34(1):109--140, 1995.

\bibitem[BT82]{Bott-Tu}
Raoul Bott and Loring~W. Tu.
\newblock {\em Differential forms in algebraic topology}, volume~82 of {\em
  Graduate Texts in Mathematics}.
\newblock Springer-Verlag, New York, 1982.

\bibitem[FLP91]{FLP}
A.~Fathi, F.~Laudenbach, and V.~Po\'enaru.
\newblock {\em Travaux de {T}hurston sur les surfaces}.
\newblock Soci\'et\'e Math\'ematique de France, Paris, 1991.
\newblock S\'eminaire Orsay, Reprint of {\it Travaux de Thurston sur les
  surfaces}, Soc.\ Math.\ France, Paris, 1979 Ast\'erisque No. 66-67 (1991).

\bibitem[Gan59]{Gm}
F.~R. Gantmacher.
\newblock {\em The theory of matrices. {V}ols. 1, 2}.
\newblock Translated by K. A. Hirsch. Chelsea Publishing Co., New York, 1959.

\bibitem[GP74]{GP}
Victor Guillemin and Alan Pollack.
\newblock {\em Differential topology}.
\newblock Prentice-Hall Inc., Englewood Cliffs, N.J., 1974.

\bibitem[HK06]{HK}
Eriko Hironaka and Eiko Kin.
\newblock A family of pseudo-{A}nosov braids with small dilatation.
\newblock {\em Algebr. Geom. Topol.}, 6:699--738 (electronic), 2006.

\bibitem[Iva88]{Iv}
N.~V. Ivanov.
\newblock Coefficients of expansion of pseudo-{A}nosov homeomorphisms.
\newblock {\em Zap. Nauchn. Sem. Leningrad. Otdel. Mat. Inst. Steklov. (LOMI)},
  167(Issled. Topol. 6):111--116, 191, 1988.

\bibitem[Iva92]{Iv2}
Nikolai~V. Ivanov.
\newblock {\em Subgroups of {T}eichm\"uller modular groups}, volume 115 of {\em
  Translations of Mathematical Monographs}.
\newblock American Mathematical Society, Providence, RI, 1992.
\newblock Translated from the Russian by E. J. F. Primrose and revised by the
  author.

\bibitem[McM00]{Mc}
Curtis~T. McMullen.
\newblock Polynomial invariants for fibered 3-manifolds and {T}eichm\"uller
  geodesics for foliations.
\newblock {\em Ann. Sci. \'Ecole Norm. Sup. (4)}, 33(4):519--560, 2000.

\bibitem[Min06]{Mk}
Hiroyuki Minakawa.
\newblock Examples of pseudo-{A}nosov homeomorphisms with small dilatations.
\newblock {\em J. Math. Sci. Univ. Tokyo}, 13(2):95--111, 2006.

\bibitem[Pen88]{Pe-const}
Robert~C. Penner.
\newblock A construction of pseudo-{A}nosov homeomorphisms.
\newblock {\em Trans. Amer. Math. Soc.}, 310(1):179--197, 1988.

\bibitem[Pen91]{Pe}
R.~C. Penner.
\newblock Bounds on least dilatations.
\newblock {\em Proc. Amer. Math. Soc.}, 113(2):443--450, 1991.

\bibitem[PH92]{PeHa}
R.~C. Penner and J.~L. Harer.
\newblock {\em Combinatorics of train tracks}, volume 125 of {\em Annals of
  Mathematics Studies}.
\newblock Princeton University Press, Princeton, NJ, 1992.

\end{thebibliography}


\bigskip

\noindent Chia-yen Tsai: \newline
Department of Mathematics, University of Illinois, Urbana-Champaign, IL 61801 \newline \noindent
\texttt{ctsai6@math.uiuc.edu}

\end{document}